%% LyX 1.6.6 created this file.  For more info, see http://www.lyx.org/.
%% Do not edit unless you really know what you are doing.
\documentclass[oneside,english]{amsart}
\usepackage[T1]{fontenc}
\usepackage[latin9]{inputenc}
\usepackage[a4paper]{geometry}
\geometry{verbose,tmargin=3cm,bmargin=3cm,lmargin=3cm,rmargin=3cm}
\usepackage{amsthm}
\usepackage{amstext}
\usepackage{amssymb}
\usepackage{esint}

\makeatletter
%%%%%%%%%%%%%%%%%%%%%%%%%%%%%% Textclass specific LaTeX commands.
\numberwithin{equation}{section}
\numberwithin{figure}{section}
  \theoremstyle{plain}
  \newtheorem*{thm*}{Theorem}
\theoremstyle{plain}
\newtheorem{thm}{Theorem}
  \theoremstyle{plain}
  \newtheorem{cor}[thm]{Corollary}
  \theoremstyle{plain}
  \newtheorem{prop}[thm]{Proposition}
  \theoremstyle{plain}
  \newtheorem{lem}[thm]{Lemma}

\makeatother

\usepackage{babel}

\begin{document}

\title{Probabilistic representation of Bernoulli, Euler, and Carlitz Hermite
polynomials}

\author{C. Vignat, Université d'Orsay}
\begin{abstract}
We revisit in a probabilistic framework the umbral approach of Bernoulli
numbers, Euler numbers and Carlitz Hermite polynomials by Gessel \cite{Gessel}.
This study allows to explicit equivalents of some famous umbr\ae. 
\end{abstract}
\maketitle

\section{Introduction}

In \cite{Gessel}, Gessel shows how the umbral calculus, introduced
by Blissard and later developed by Rota and Roman \cite{Roman}, allows
to derive some elementary but also more elaborate results about Bernoulli
numbers and about classical polynomials such as Charlier and Hermite
polynomials. In this note, we show that another approach based on
probabilistic representations can be substituted to the umbral formalism.
This substitution brings interesting results about the relationship
between random variables and umbr\ae.

\section{Bernoulli numbers and Bernoulli polynomials}

The sequence of Bernoulli numbers $\left\{ B_{n}\right\} $ is defined
by its generating function\[
\sum_{n=0}^{+\infty}B_{n}\frac{t^{n}}{n!}=\frac{t}{e^{t}-1},\,\,\vert t\vert<2\pi\]
and the sequence of Bernoulli polynomials $\left\{ B_{n}\left(x\right)\right\} $
by their generating function\[
\sum_{n=0}^{+\infty}B_{n}\left(x\right)\frac{t^{n}}{n!}=\frac{te^{tx}}{e^{t}-1},\,\,\vert t\vert<2\pi,\, x\in\mathbb{R},\]
so that the Bernoulli numbers verify $B_{n}=B_{n}\left(0\right).$
First values are\[
B_{0}\left(x\right)=1;\,\, B_{1}\left(x\right)=x-\frac{1}{2};\,\, B_{2}\left(x\right)=x^{2}-x+\frac{1}{6};\,\, B_{3}\left(x\right)=x^{3}-\frac{3}{2}x^{2}+\frac{x}{2}\]
and \[
B_{0}=1;\,\, B_{1}=-\frac{1}{2};\,\, B_{2}=\frac{1}{6};\,\, B_{3}=0;\,\, B_{4}=-\frac{1}{30}.\]

In \cite{Sun}, Sun gives the following probabilistic representation
of the Bernoulli polynomials
\begin{thm*}
{[}Sun{]} Given a sequence $\left\{ L_{n}\right\} $ of independent
random variables, each with Laplace distribution $\frac{1}{2}\exp\left(-\vert x\vert\right),\,\, x\in\mathbb{R},$
define the random variable \begin{equation}
L=\sum_{k=1}^{+\infty}\frac{L_{k}}{2\pi k}.\label{eq:L}\end{equation}
Then the following probabilistic representations hold %
\footnote{In this paper, the symbol $E$ denotes the expectation operator $Eg\left(X\right)=\int f\left(X\right)g\left(X\right)dX$
where $f$ is the probability density of the relevant random variable.%
}\begin{equation}
B_{n}\left(x\right)=E\left(\imath L+x-\frac{1}{2}\right)^{n},\,\, n\ge0,\,\, x\in\mathbb{R}\label{eq:Bn(x)Sun}\end{equation}
and \begin{eqnarray}
B_{n} & = & E\left(\imath L-\frac{1}{2}\right)^{n},\,\, n\ge0.\label{eq:Bn}\end{eqnarray}

\end{thm*}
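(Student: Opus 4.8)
The plan is to verify Sun's identities at the level of exponential generating functions: I will show that
\[
E\, e^{t\left(\imath L + x - \frac{1}{2}\right)} = \frac{t\, e^{tx}}{e^{t}-1}, \qquad \vert t\vert < 2\pi,
\]
and then read off \eqref{eq:Bn(x)Sun} by matching the coefficient of $t^{n}/n!$ against the defining generating function of $\left\{B_{n}(x)\right\}$; the numerical identity \eqref{eq:Bn} is then the special case $x=0$.

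Two ingredients feed the computation. First, an elementary integration gives the characteristic function of a single Laplace variable, $E\, e^{\imath s L_{k}} = (1+s^{2})^{-1}$ for $s\in\mathbb{R}$. Second, the Euler product for the hyperbolic sine,
\[
\frac{\sinh(\pi z)}{\pi z} = \prod_{k=1}^{\infty}\left(1 + \frac{z^{2}}{k^{2}}\right).
\]
Combining the independence of the $L_{k}$ with the definition \eqref{eq:L} of $L$, I would compute
\[
E\, e^{\imath t L} = \prod_{k=1}^{\infty} E\, e^{\imath t L_{k}/(2\pi k)} = \prod_{k=1}^{\infty}\frac{1}{1 + \left(t/(2\pi k)\right)^{2}} = \frac{t/2}{\sinh(t/2)},
\]
the last equality being the product formula with $z = t/(2\pi)$. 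Multiplying by the deterministic factor $e^{t(x-1/2)}$ and using $\sinh(t/2) = \tfrac12(e^{t/2}-e^{-t/2})$ then yields
\[
E\, e^{t\left(\imath L + x - \frac{1}{2}\right)} = e^{t(x-1/2)}\cdot\frac{t}{e^{t/2}-e^{-t/2}} = \frac{t\, e^{tx}\, e^{-t/2}}{e^{t/2}-e^{-t/2}} = \frac{t\, e^{tx}}{e^{t}-1},
\]
which is exactly $\sum_{n\ge 0} B_{n}(x)\, t^{n}/n!$. Identifying coefficients gives $B_{n}(x) = E\left(\imath L + x - \tfrac12\right)^{n}$.

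The only genuine work is the analytic bookkeeping, which I expect to be the main obstacle. One must check that $L$ is a bona fide random variable: the series $\sum_{k\ge 1} L_{k}/(2\pi k)$ converges almost surely and in $L^{2}$ since $\sum k^{-2}<\infty$ and each $L_{k}$ has variance $2$, and standard estimates for sums of independent centred variables then give $E\vert L\vert^{n}<\infty$ for every $n$. One must also justify passing the expectation through the infinite product — the partial products $\prod_{k\le N}\bigl(1+(t/(2\pi k))^{2}\bigr)^{-1}$ are positive and decrease to their limit, so monotone convergence applies — and justify that the Taylor coefficient at $0$ of $t\mapsto E\, e^{tY}$ equals $EY^{n}/n!$ for $Y = \imath L + x - \tfrac12$; this is Fubini once $E\vert Y\vert^{n}<\infty$ for all $n$, and it is compatible with the stated range $\vert t\vert<2\pi$, since $t\mapsto t e^{tx}/(e^{t}-1)$ is holomorphic there, its singularities nearest the origin being the simple poles at $t=\pm2\pi\imath$.
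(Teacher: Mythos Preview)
Your argument is correct and follows the same generating-function route that the paper uses: the characteristic function $E\,e^{\imath t L} = (t/2)/\sinh(t/2)$ is asserted in the proof that $L$ is logistic, and in the proof of Proposition~\ref{pro:proposition1} it is multiplied by $e^{-t/2}$ to recover $t/(e^{t}-1)$, exactly as you do. The paper does not present this as an independent proof of Sun's theorem---the result is simply quoted from \cite{Sun}---so your version is more self-contained, supplying the Euler-product derivation of the characteristic function and the analytic bookkeeping (convergence of the series defining $L$, finiteness of moments, Fubini) that the paper leaves implicit.
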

The random variable $L$, being defined as a series of i.i.d. random
variables, is not easy to characterize. Thus we propose the equivalent
result.
\begin{thm*}
The random variable in \eqref{eq:L} follows a logistic distribution,
with density\[
f_{L}\left(x\right)=\frac{\pi}{2}\text{sech}^{2}\left(\pi x\right),\,\, x\in\mathbb{R}.\]
\end{thm*}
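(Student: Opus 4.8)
The plan is to compute the characteristic function of $L$ and recognize it as that of the claimed logistic law, then invoke uniqueness. First I would record the elementary fact that a Laplace variable with density $\frac12 e^{-\vert x\vert}$ has characteristic function $E e^{\imath t L_k}=(1+t^{2})^{-1}$. The series \eqref{eq:L} converges almost surely: the summands are centered (Laplace is symmetric) with variances $\operatorname{Var}\bigl(L_k/(2\pi k)\bigr)=2/(2\pi k)^{2}$, which are summable, so Kolmogorov's two-series theorem applies. Independence of the $L_k$ then turns the characteristic function of $L$ into an infinite product:
\[
\phi_{L}(t)=\prod_{k=1}^{\infty}\frac{1}{1+\bigl(\tfrac{t}{2\pi k}\bigr)^{2}}.
\]

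Next I would bring in the Euler/Weierstrass product for the hyperbolic sine, $\sinh z=z\prod_{k\ge1}\bigl(1+z^{2}/(\pi^{2}k^{2})\bigr)$, applied with $z=t/2$. This yields $\prod_{k\ge1}\bigl(1+t^{2}/(4\pi^{2}k^{2})\bigr)=2\sinh(t/2)/t$, hence
\[
\phi_{L}(t)=\frac{t/2}{\sinh(t/2)}.
\]

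Finally I would identify the right-hand side with the characteristic function of the stated density. The cleanest route is to recall that the standard logistic variable $Y$, with density $\frac14\operatorname{sech}^{2}(y/2)=e^{-y}/(1+e^{-y})^{2}$, has characteristic function $\pi t/\sinh(\pi t)$; then $L':=Y/(2\pi)$ has density $2\pi\cdot\frac14\operatorname{sech}^{2}(\pi x)=\frac{\pi}{2}\operatorname{sech}^{2}(\pi x)$ and characteristic function $\phi_{Y}\bigl(t/(2\pi)\bigr)=\frac{t/2}{\sinh(t/2)}=\phi_{L}(t)$. Since characteristic functions determine distributions, $L\overset{d}{=}L'$, which is exactly the assertion. (Alternatively one evaluates $\int_{\mathbb{R}}\frac{\pi}{2}\operatorname{sech}^{2}(\pi x)\,e^{\imath t x}\,dx$ directly and checks it equals $\frac{t/2}{\sinh(t/2)}$.)

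The main obstacle is entirely on the analytic side rather than the probabilistic one: the proof hinges on the infinite-product expansion of $\sinh$ and on knowing the Fourier transform of $\operatorname{sech}^{2}$. If one prefers a self-contained treatment, the single genuinely computational step is evaluating that Fourier transform by residues — closing the contour in the upper half-plane and summing the contributions of the double poles of $\operatorname{sech}^{2}(\pi x)$ at $x=\imath(2m+1)/2$; everything else (the Laplace characteristic function, a.s.\ convergence of the series, and the product formula for $\phi_{L}$) is routine.
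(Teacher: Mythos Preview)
Your proof is correct and follows essentially the same route as the paper: compute the characteristic function of $L$ as $\dfrac{t/2}{\sinh(t/2)}$ and then identify it as the Fourier transform of $\dfrac{\pi}{2}\operatorname{sech}^{2}(\pi x)$. The only differences are in detail---you derive the characteristic function via the Euler product for $\sinh$ (the paper simply asserts it) and you match it by rescaling the standard logistic law, whereas the paper quotes the cosine transform of $\operatorname{sech}^{2}$ from Bateman's tables.
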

\begin{proof}
The random variable $L$ in \eqref{eq:L} has characteristic function\[
E\left(e^{\imath tL}\right)=\frac{\frac{t}{2}}{\sinh\left(\frac{t}{2}\right)}.\]
But from \cite[1.9.2]{Bateman}\[
\int_{0}^{+\infty}\text{sech}^{2}\left(ax\right)\cos\left(xt\right)dx=\frac{\pi t}{2a^{2}}\text{csch}\left(\frac{\pi t}{2a}\right)\]
so that, with $a=\pi,$ the density of $L$ is \[
f_{L}\left(x\right)=\frac{\pi}{2}\text{sech}^{2}\left(\pi x\right).\]

\end{proof}
We note from \cite[p. 471]{Devroye} that the random variable $L$
can be obtained as \[
L=\frac{1}{2\pi}\log\frac{U}{1-U}=\frac{1}{2\pi}\log\frac{E_{1}}{E_{2}}\]
where U is uniformly distributed on $\left[-1,+1\right]$, $E_{1}$
and $E_{2}$ are independent with exponential distribution $f_{E}\left(x\right)=\exp\left(-x\right),\,\, x\in\left[0,+\infty\right[$
and equality is in the sense of distributions.
\begin{cor}
The Bernoulli polynomials read \begin{equation}
B_{n}\left(x\right)=E\left(\frac{\imath}{2\pi}\log\frac{U}{1-U}+x-\frac{1}{2}\right)^{n}=E\left(\frac{\imath}{2\pi}\log\frac{E_{1}}{E_{2}}+x-\frac{1}{2}\right)^{n}\label{eq:BnU}\end{equation}
and the Bernoulli numbers\begin{eqnarray*}
B_{n} & = & E\left(\frac{\imath}{2\pi}\log\frac{U}{1-U}-\frac{1}{2}\right)^{n}=E\left(\frac{\imath}{2\pi}\log\frac{E_{1}}{E_{2}}-\frac{1}{2}\right)^{n},\,\, n\ge0.\end{eqnarray*}

\end{cor}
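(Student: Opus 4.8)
The plan is to deduce the corollary directly from the two preceding theorems together with the distributional identities quoted from Devroye; nothing new about Bernoulli polynomials is required. The starting point is Sun's representation \eqref{eq:Bn(x)Sun}, $B_{n}\left(x\right)=E\left(\imath L+x-\tfrac12\right)^{n}$, combined with the fact just proved that $L$ has the logistic density $f_{L}\left(x\right)=\tfrac{\pi}{2}\,\text{sech}^{2}\left(\pi x\right)$. Since the right-hand side of \eqref{eq:Bn(x)Sun} is the expectation of the fixed polynomial $y\mapsto\left(\imath y+x-\tfrac12\right)^{n}$ evaluated at $L$, it depends on $L$ only through its law; hence $L$ may be replaced by any random variable having the same distribution. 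Granting this, \eqref{eq:BnU} is equivalent to the two claims $\tfrac{1}{2\pi}\log\tfrac{U}{1-U}\stackrel{d}{=}L$ and $\tfrac{1}{2\pi}\log\tfrac{E_{1}}{E_{2}}\stackrel{d}{=}L$, and the formula for $B_{n}$ follows by specialising $x=0$.

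So the two steps to carry out are: (i) verify $\tfrac{1}{2\pi}\log\tfrac{U}{1-U}\stackrel{d}{=}L$; (ii) verify $\tfrac{1}{2\pi}\log\tfrac{E_{1}}{E_{2}}\stackrel{d}{=}L$. For (i) I would integrate $f_{L}$ to obtain the logistic distribution function $F_{L}\left(x\right)=\tfrac12\bigl(1+\tanh\left(\pi x\right)\bigr)=\bigl(1+e^{-2\pi x}\bigr)^{-1}$, solve $u=F_{L}\left(x\right)$ for $x$ to get the quantile function $F_{L}^{-1}\left(u\right)=\tfrac{1}{2\pi}\log\tfrac{u}{1-u}$, and invoke the inverse-transform principle: if $U$ is uniform on the unit interval then $F_{L}^{-1}\left(U\right)$ has law $F_{L}$. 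For (ii) the cleanest route is to compute the density of $R=E_{1}/E_{2}$ directly, $f_{R}\left(r\right)=\int_{0}^{\infty}re^{-rs}e^{-s}\,ds\cdot$\dots$=\left(1+r\right)^{-2}$ on $\left(0,\infty\right)$, and then push $R$ forward by $x=\tfrac{1}{2\pi}\log r$; the change of variables turns $\left(1+r\right)^{-2}$ into exactly $\tfrac{\pi}{2}\text{sech}^{2}\left(\pi x\right)$. (Equivalently one may note that $-\log E_{i}$ is standard Gumbel, so $\log E_{1}-\log E_{2}$ is a difference of two independent standard Gumbels, hence standard logistic, and the scaling by $1/(2\pi)$ reproduces $f_{L}$.)

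The genuinely substantive work — identifying the law of the series $L$ through its characteristic function $\tfrac{t/2}{\sinh(t/2)}$ — has already been done in the previous theorem, so the only real content remaining for the corollary is the elementary quantile/ratio-of-exponentials bookkeeping above, which I do not expect to present any obstacle. The one point worth recording is well-definedness: each expectation in \eqref{eq:BnU} is finite because the logistic density has exponential tails, so $L$ and all its distributional copies possess moments of every order, whence $\left(\imath L+x-\tfrac12\right)^{n}$ is integrable for every $n\ge0$ and every real $x$; this is what licenses the substitution of one distributional representative of $L$ for another.
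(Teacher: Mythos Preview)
Your proposal is correct and follows the same approach as the paper: the paper does not give a separate proof of the corollary at all, but simply notes (citing Devroye) the distributional identities $L\stackrel{d}{=}\tfrac{1}{2\pi}\log\tfrac{U}{1-U}\stackrel{d}{=}\tfrac{1}{2\pi}\log\tfrac{E_{1}}{E_{2}}$ and then records the corollary as an immediate substitution into Sun's representation \eqref{eq:Bn(x)Sun}. You supply in addition the explicit verifications of these two identities (via the quantile function and the ratio-of-exponentials density), which the paper outsources to the reference; this is extra detail rather than a different route.
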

We remark that the result by Sun was already given by Talacko in \cite{Talacko}. 

We note that $L$ is also \cite{Stefanski} a Gaussian scale mixture\[
L=2K.N\]
where $N$ is Gaussian and $K$ follows the Kolmogorov-Smirnov distribution\[
f_{K}\left(x\right)=8x\sum_{n=1}^{+\infty}\left(-1\right)^{n+1}n^{2}\exp\left(-2n^{2}x^{2}\right),\,\, x\ge0.\]
This result was given first by Barndorff-Nielsen et al. in \cite{Barndorff},
where the mixing distribution of $2K$ is also characterized by its
moment generating function \[
\varphi_{K}\left(s\right)=E^{Ks}=\frac{\pi\sqrt{2s}}{\sin\left(\pi\sqrt{2s}\right)}.\]

\begin{prop}
\label{pro:proposition1}The Bernoulli numbers satisfy\[
B_{n}=E\left(\imath L+\frac{1}{2}\right)^{n}\,\,,\,\, n\ne1.\]
\end{prop}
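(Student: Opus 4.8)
The plan is to deduce this from Sun's representation \eqref{eq:Bn}, that is $B_{n}=E\left(\imath L-\frac{1}{2}\right)^{n}$, together with the symmetry of the random variable $L$. Having just identified the density of $L$ as $f_{L}\left(x\right)=\frac{\pi}{2}\text{sech}^{2}\left(\pi x\right)$, which is an even function, we see that $L$ and $-L$ are equal in distribution. Hence in \eqref{eq:Bn} we may replace $L$ by $-L$ without altering the expectation:
\[
B_{n}=E\left(\imath L-\frac{1}{2}\right)^{n}=E\left(-\imath L-\frac{1}{2}\right)^{n}=\left(-1\right)^{n}E\left(\imath L+\frac{1}{2}\right)^{n},
\]
so that $E\left(\imath L+\frac{1}{2}\right)^{n}=\left(-1\right)^{n}B_{n}$.

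It then remains only to observe that $\left(-1\right)^{n}B_{n}=B_{n}$ precisely when $n\ne1$. This is the classical vanishing of the odd-index Bernoulli numbers, $B_{2k+1}=0$ for every $k\ge1$: for even $n$ (including $n=0$) the sign $\left(-1\right)^{n}$ equals $1$; for odd $n\ge3$ both sides vanish; and for $n=1$ the identity genuinely fails, since $-B_{1}=\frac{1}{2}\ne-\frac{1}{2}=B_{1}$. This establishes the proposition.

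There is no real obstacle here beyond recalling those two ingredients — the evenness of $f_{L}$ and the vanishing of $B_{2k+1}$ for $k\ge1$. Alternatively one may avoid the symmetry argument altogether by setting $x=1$ in the polynomial representation \eqref{eq:Bn(x)Sun}, which gives $E\left(\imath L+\frac{1}{2}\right)^{n}=B_{n}\left(1\right)$, and then invoking $B_{n}\left(1\right)=B_{n}\left(0\right)+n\cdot0^{n-1}=B_{n}$, valid for $n\ne1$. Either route makes the exceptional status of $n=1$ transparent.
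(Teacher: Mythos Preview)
Your argument is correct, but it follows a different route from the paper's. The paper proves the proposition by a generating-function computation: it shows that
\[
\sum_{n\ge0}E\left(\imath L+\tfrac{1}{2}\right)^{n}\frac{t^{n}}{n!}
= e^{t/2}\,E e^{\imath tL}
= \frac{te^{t}}{e^{t}-1}
= \frac{t}{e^{t}-1}+t,
\]
so that the two generating functions differ only in the linear term, whence $E(\imath L+\tfrac12)^{n}=B_{n}$ for every $n\ne1$. Your proof instead combines the symmetry of $L$ with the classical fact $B_{2k+1}=0$ for $k\ge1$.

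The trade-off is one of logical ordering. The paper's generating-function argument is self-contained within the probabilistic setup: it uses only the characteristic function of $L$. In the paper's development, the vanishing of the odd Bernoulli numbers is then \emph{derived} as a consequence of this proposition together with the symmetry identity you use (these are the paper's Propositions~\ref{pro:[Gessel,-(7.2)]} and~\ref{pro:The-Bernoulli-numbers}). Your proof imports that vanishing as an external classical fact, which is perfectly legitimate but reverses the flow the paper is aiming for. Your alternative via $B_{n}(1)=B_{n}$ for $n\ne1$ is likewise correct and similarly relies on a standard identity rather than on the representation itself.
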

\begin{proof}
We first compute the generating function\begin{eqnarray*}
\sum_{n=0}^{+\infty}B_{n}\frac{t^{n}}{n!} & = & E\sum_{n=0}^{+\infty}\left(\imath L-\frac{1}{2}\right)^{n}\frac{t^{n}}{n!}\\
 & = & \exp\left(-\frac{t}{2}\right)E\exp\left(\imath tL\right)=e^{-\frac{t}{2}}\frac{t}{e^{\frac{t}{2}}-e^{-\frac{t}{2}}}=\frac{t}{e^{t}-1}\end{eqnarray*}
as it should be. Now let us consider\[
E\sum_{n=0}^{+\infty}\left(\imath L+\frac{1}{2}\right)^{n}\frac{t^{n}}{n!}=\frac{te^{t}}{e^{t}-1}=\frac{t}{e^{t}-1}+t\]
so that, since E$\left(\imath L+\frac{1}{2}\right)=\frac{1}{2},$\[
E\sum_{n=2}^{+\infty}\left(\imath L-\frac{1}{2}\right)^{n}\frac{t^{n}}{n!}=E\sum_{n=2}^{+\infty}\left(\imath L+\frac{1}{2}\right)^{n}\frac{t^{n}}{n!}\]
which shows the result.
\end{proof}
Let us show now that the representation \eqref{eq:Bn} allows to recover
easily some fundamental results about Bernoulli numbers.

In umbral calculus, as described in \cite{Gessel}, a Bernoulli number
$B_{n}$ is represented by an umbra $B$ which should be replaced
by $B_{n}$ each time it appears as $B^{n}$.
\begin{prop}
\label{pro:[Gessel,-(7.2)]}{[}Gessel, (7.2){]} The Bernoulli numbers
satisfy, in the umbral notation,\[
\left(B+1\right)^{n}=B^{n},\,\,\forall n\ne1.\]
\end{prop}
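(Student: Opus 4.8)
The plan is to translate the umbral identity directly into the probabilistic language of \eqref{eq:Bn} and then recognize it as Proposition~\ref{pro:proposition1}. Recall that the umbral symbol $B^{n}$ is to be read as $B_{n}$, and, since the umbral evaluation is linear, a polynomial expression in $B$ is evaluated term by term; thus $\left(B+1\right)^{n}$ stands for $\sum_{k=0}^{n}\binom{n}{k}B_{k}$.

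First I would substitute the representation $B_{k}=E\left(\imath L-\frac{1}{2}\right)^{k}$ into this sum. By linearity of the expectation and the binomial theorem,
\[
\sum_{k=0}^{n}\binom{n}{k}B_{k}=E\sum_{k=0}^{n}\binom{n}{k}\left(\imath L-\frac{1}{2}\right)^{k}=E\left(\imath L-\frac{1}{2}+1\right)^{n}=E\left(\imath L+\frac{1}{2}\right)^{n}.
\]
On the other hand, the right-hand side of the claimed identity is simply $B^{n}=B_{n}=E\left(\imath L-\frac{1}{2}\right)^{n}$ by \eqref{eq:Bn}.

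Hence the umbral relation $\left(B+1\right)^{n}=B^{n}$ is equivalent to $E\left(\imath L+\frac{1}{2}\right)^{n}=E\left(\imath L-\frac{1}{2}\right)^{n}=B_{n}$, which is exactly the content of Proposition~\ref{pro:proposition1}, valid for every $n\ne1$. The only point that could require care is the interchange of the sum with the expectation, but the sum here is finite, so there is no convergence issue and the step is immediate; the genuine work has already been done in the proof of Proposition~\ref{pro:proposition1}, where the generating function computation showed that $E\left(\imath L+\frac{1}{2}\right)^{n}$ and $E\left(\imath L-\frac{1}{2}\right)^{n}$ coincide precisely for $n\ne1$. For $n=1$ the identity fails, consistently with $B_{1}=-\frac{1}{2}\ne\frac{1}{2}=E\left(\imath L+\frac{1}{2}\right)$. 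So there is essentially no obstacle: the whole content of this proposition is repackaging Proposition~\ref{pro:proposition1}, and the point worth stressing is that the seemingly mysterious umbral exclusion $n\ne1$ is transparently explained by the single anomalous moment of the logistic variable $\imath L+\tfrac12$.
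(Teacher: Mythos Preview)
Your argument is correct and follows exactly the paper's approach: both reduce the umbral identity $(B+1)^n=B^n$ to the equality $E(\imath L+\tfrac12)^n=E(\imath L-\tfrac12)^n$ and then invoke Proposition~\ref{pro:proposition1}. Your write-up is simply more explicit about the binomial expansion step that the paper leaves implicit.
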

\begin{proof}
In a probabilistic setting, this is a direct consequence of the result
of Proposition \ref{pro:proposition1} for $n\ne1,$ \[
B_{n}=E\left(\imath L-\frac{1}{2}\right)^{n}=E\left(\imath L+\frac{1}{2}\right)^{n}.\]
\end{proof}
\begin{prop}
\label{pro:The-Bernoulli-numbers}The Bernoulli numbers verify, in
the umbral notation,\[
\left(B+1\right)^{n}=\left(-1\right)^{n}B^{n},\,\, n\ne1.\]
\end{prop}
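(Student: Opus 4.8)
The plan is to push everything through the probabilistic representation $B_{n}=E\left(\imath L-\frac{1}{2}\right)^{n}$ of \eqref{eq:Bn} together with the symmetry of $L$, in the same spirit as the proof of Proposition \ref{pro:[Gessel,-(7.2)]}. First I would unwind the umbral notation on the left-hand side: by the binomial theorem $\left(B+1\right)^{n}=\sum_{k=0}^{n}\binom{n}{k}B_{k}$, and substituting \eqref{eq:Bn} and interchanging the finite sum with the expectation gives $\left(B+1\right)^{n}=E\left(\imath L-\frac{1}{2}+1\right)^{n}=E\left(\imath L+\frac{1}{2}\right)^{n}$.

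Next I would use that the logistic density $f_{L}\left(x\right)=\frac{\pi}{2}\text{sech}^{2}\left(\pi x\right)$ is an even function, so that $-L\stackrel{d}{=}L$. Consequently $E\left(\imath L+\frac{1}{2}\right)^{n}=E\left(-\imath L+\frac{1}{2}\right)^{n}=\left(-1\right)^{n}E\left(\imath L-\frac{1}{2}\right)^{n}=\left(-1\right)^{n}B_{n}$, and reading $\left(-1\right)^{n}B_{n}$ back in umbral notation as $\left(-1\right)^{n}B^{n}$ closes the argument. In fact this chain yields the identity for every $n\ge0$; the hypothesis $n\ne1$ in the statement is there only to line the identity up with Proposition \ref{pro:[Gessel,-(7.2)]}, since combining $\left(B+1\right)^{n}=B^{n}$ and $\left(B+1\right)^{n}=\left(-1\right)^{n}B^{n}$ for $n\ne1$ recovers the classical fact that $B_{n}=0$ for odd $n\ge3$.

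I do not expect a genuine obstacle: the only point that needs a little care is the very first step, namely that the umbral value of $\left(B+1\right)^{n}$ has to be taken as the full binomial sum $\sum_{k}\binom{n}{k}B_{k}$, including the $k=1$ term, before any simplification, so that the passage to $E\left(\imath L+\frac{1}{2}\right)^{n}$ is an exact identity. Once that is granted, the symmetry $L\stackrel{d}{=}-L$ does all the remaining work.
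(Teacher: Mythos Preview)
Your proof is correct and essentially identical to the paper's: both identify $(B+1)^{n}$ with $E\left(\imath L+\frac{1}{2}\right)^{n}$ and then use the symmetry $L\stackrel{d}{=}-L$ (the paper phrases this as swapping $E_{1}\leftrightarrow E_{2}$ in the representation $L=\frac{1}{2\pi}\log\frac{E_{1}}{E_{2}}$) to obtain $(-1)^{n}E\left(\imath L-\frac{1}{2}\right)^{n}=(-1)^{n}B_{n}$. Your additional observation that the identity in fact holds for every $n\ge0$, including $n=1$, is correct.
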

\begin{proof}
Since by symmetry \begin{eqnarray*}
E\left(\frac{\imath}{2\pi}\log\frac{E_{1}}{E_{2}}-\frac{1}{2}\right)^{n} & = & E\left(\frac{\imath}{2\pi}\log\frac{E_{2}}{E_{1}}-\frac{1}{2}\right)^{n}\\
 & = & \left(-1\right)^{n}E\left(\frac{\imath}{2\pi}\log\frac{E_{1}}{E_{2}}+\frac{1}{2}\right)^{n}=\left(-1\right)^{n}B_{n}\end{eqnarray*}
for $n\ge2,$ we deduce the result.
\end{proof}
Other integral representations of Bernoulli numbers and polynomials
exist in the litterature; for example, \eqref{eq:BnU} can be considered
as a substitute for the Mellin Barnes integral \cite[24.7.11]{NIST}
\[
B_{n}\left(x\right)=\frac{1}{2\pi\imath}\int_{-c-\imath\infty}^{-c+\imath\infty}\left(x+t\right)^{n}\left(\frac{\pi}{\sin\pi t}\right)^{2}dt;\]
moreover, the integral representation \cite[p.39, eq. (27)]{Bateman2}\[
B_{2n}=\left(-1\right)^{n+1}\pi\int_{0}^{+\infty}t^{2n}\text{csch}^{2}\left(\pi t\right)dt\]
can be easily deduced from \eqref{eq:Bn}; however, this identity
holds only for even order Bernoulli numbers - the odd order ones being
equal to $0$ except $B_{1}$, as a consequence of propositions \ref{pro:[Gessel,-(7.2)]}
and \ref{pro:The-Bernoulli-numbers}. This can be a difficulty in
the calculation of some series which involve all Bernoulli numbers.
As an example, we provide a short proof of Kaneko's theorem.
\begin{thm*}
{[}Kaneko{]} The Bernoulli numbers verify\[
\sum_{i=0}^{n+1}\binom{n+1}{i}\left(n+i+1\right)B_{n+i}=0,\,\, n\ge0.\]
\end{thm*}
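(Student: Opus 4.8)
The plan is to use the probabilistic representation \eqref{eq:Bn}, namely $B_{m}=E\left(\imath L-\frac{1}{2}\right)^{m}$ for every $m\ge0$, and to recognize the Kaneko sum as the expectation of an exact derivative, which then vanishes by the symmetry of the law of $L$.

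First I would set $X=\imath L-\frac{1}{2}$, so that $B_{n+i}=EX^{n+i}$ and, by linearity over the finite sum, the left-hand side of Kaneko's identity equals $E\sum_{i=0}^{n+1}\binom{n+1}{i}\left(n+i+1\right)X^{n+i}$. The key observation is that $\left(n+i+1\right)X^{n+i}=\frac{d}{dX}X^{n+i+1}$, so the polynomial inside the expectation is
\[
\frac{d}{dX}\left[X^{n+1}\sum_{i=0}^{n+1}\binom{n+1}{i}X^{i}\right]=\frac{d}{dX}\left[X^{n+1}\left(1+X\right)^{n+1}\right]=\left(n+1\right)X^{n}\left(1+X\right)^{n}\left(1+2X\right).
\]
No interchange of limiting operations is involved, since this is an identity between polynomials in $X$.

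Next I would substitute back $X=\imath L-\frac{1}{2}$. Then $X\left(1+X\right)=\left(\imath L\right)^{2}-\frac{1}{4}=-L^{2}-\frac{1}{4}$ and $1+2X=2\imath L$, so the Kaneko sum becomes $\left(n+1\right)E\left[\left(-L^{2}-\frac{1}{4}\right)^{n}\,2\imath L\right]$. The integrand is an odd polynomial in $L$, whereas the density $f_{L}\left(x\right)=\frac{\pi}{2}\text{sech}^{2}\left(\pi x\right)$ is even; since $f_{L}$ decays exponentially, $L$ has finite moments of every order and the expectation is well defined, hence equal to $0$. This yields $\sum_{i=0}^{n+1}\binom{n+1}{i}\left(n+i+1\right)B_{n+i}=0$.

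There is essentially no hard step here: the only point worth a word is that all moments of $L$ are finite, so that the final expectation of a polynomial integrand makes sense, which is immediate from the exponential decay of $\text{sech}^{2}$. The single idea one has to spot is that the binomial-weighted sum is the derivative of $X^{n+1}\left(1+X\right)^{n+1}$; once it is written that way, the invariance of $L$ under $L\mapsto-L$ does the rest.
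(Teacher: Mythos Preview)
Your proof is correct and follows essentially the same route as the paper: both reduce the Kaneko sum to $(n+1)X^{n}(1+X)^{n}(1+2X)$ with $X=\imath L-\frac{1}{2}$ and then invoke the symmetry of $L$. Your derivation of the polynomial identity as the derivative of $X^{n+1}(1+X)^{n+1}$ is a nice touch that the paper simply leaves as ``easily checked,'' but otherwise the arguments coincide.
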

\begin{proof}
It can be easily checked that $\forall z\in\mathbb{C},\,\, n\ge0,$\[
\sum_{i=0}^{n+1}\binom{n+1}{i}\left(n+i+1\right)z^{n+i}=\left(n+1\right)z^{n}\left(1+z\right)^{n}\left(1+2z\right)\]
so that \begin{eqnarray*}
\sum_{i=0}^{n+1}\binom{n+1}{i}\left(n+i+1\right)B_{n+i} & = & \left(n+1\right)E\left(\imath L-\frac{1}{2}\right)^{n}\left(\imath L+\frac{1}{2}\right)^{n}\left(2\imath L\right)\\
 & = & 2\imath\left(n+1\right)E\left[L\left(-L^{2}-\frac{1}{4}\right)^{n}\right]\end{eqnarray*}
The random variable $L$ being symmetric, i.e. $-L$ being distributed
as $+L$, the former expectation can be computed as\[
E\left[L\left(-L^{2}-\frac{1}{4}\right)^{n}\right]=E\left[-L\left(-L^{2}-\frac{1}{4}\right)^{n}\right]=0\]
what proves the result.
\end{proof}
Kaneko's theorem is a special case of Momiyama's identity \[
\left(-1\right)^{m}\sum_{k=0}^{m}\binom{m+1}{k}\left(n+k+1\right)B_{n+k}=\left(-1\right)^{n+1}\sum_{k=0}^{n}\binom{n+1}{k}\left(m+k+1\right)B_{m+k}\]
which can also be easily proved by remarking that\begin{eqnarray*}
\sum_{k=0}^{m}\binom{m+1}{k}\left(n+k+1\right)B_{n+k} & = & E_{L}\left(n+1\right)\left(\imath L-\frac{1}{2}\right)^{n}\left(\imath L+\frac{1}{2}\right)^{m+1}\\
 & + & \left(m+1\right)\left(\imath L-\frac{1}{2}\right)^{n+1}\left(\imath L+\frac{1}{2}\right)^{m}\\
 & - & \left(m+n+2\right)\left(\imath L-\frac{1}{2}\right)^{m+n+1}\end{eqnarray*}
which, by the symmetry of $L,$ can be shown to coincide with the
right-hand side sum.

Another famous identity can be proved easily using the probabilistic
approach
\begin{thm}
\label{thm:Bernoulli integration}The following identity holds\[
\int_{x}^{y}B_{n}\left(z\right)dz=\frac{B_{n+1}\left(y\right)-B_{n+1}\left(x\right)}{n+1},\,\, n\ge2\]
\end{thm}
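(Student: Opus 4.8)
The plan is to invoke Sun's probabilistic representation \eqref{eq:Bn(x)Sun}, namely $B_{n}(z)=E\left(\imath L+z-\frac{1}{2}\right)^{n}$ with $L$ the logistic random variable identified above, and then to interchange the deterministic integral $\int_{x}^{y}\cdot\,dz$ with the expectation $E$. Concretely, I would write
\[
\int_{x}^{y}B_{n}(z)\,dz=\int_{x}^{y}E\left(\imath L+z-\frac{1}{2}\right)^{n}dz=E\int_{x}^{y}\left(\imath L+z-\frac{1}{2}\right)^{n}dz,
\]
then evaluate the inner integral by the elementary antiderivative $\frac{d}{dz}\left[\frac{1}{n+1}\left(\imath L+z-\frac{1}{2}\right)^{n+1}\right]=\left(\imath L+z-\frac{1}{2}\right)^{n}$, so that
\[
E\int_{x}^{y}\left(\imath L+z-\frac{1}{2}\right)^{n}dz=\frac{1}{n+1}\left[E\left(\imath L+y-\frac{1}{2}\right)^{n+1}-E\left(\imath L+x-\frac{1}{2}\right)^{n+1}\right],
\]
and finally recognize the two expectations on the right as $B_{n+1}(y)$ and $B_{n+1}(x)$, again via \eqref{eq:Bn(x)Sun}. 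This is nothing but the fundamental theorem of calculus carried out inside the expectation.

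The one point that genuinely requires care is the interchange of $\int_{x}^{y}$ and $E$. Expanding $\left(\imath L+z-\frac{1}{2}\right)^{n}$ by the binomial theorem, each term is a monomial in $L$ times a polynomial in $z$; since $z$ ranges over the compact interval $[x,y]$ and the logistic law has finite absolute moments of every order ($E\vert L\vert^{k}<\infty$ for all $k$, as is clear from $f_{L}(x)=\frac{\pi}{2}\text{sech}^{2}(\pi x)$), the integrand is dominated by an integrable function on $[x,y]\times\mathbb{R}$ and Fubini's theorem applies. This is the main—and essentially the only—obstacle; the rest is the one-line computation above.

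As a remark, the same identity also follows from the generating function $\sum_{n\ge0}B_{n}(z)\,\frac{t^{n}}{n!}=\frac{te^{tz}}{e^{t}-1}$ by integrating in $z$, using $\int_{x}^{y}te^{tz}\,dz=e^{ty}-e^{tx}$, and matching coefficients of $t^{n}$; but the probabilistic derivation makes transparent why the statement holds. I would also note that the argument in fact yields the identity for every $n\ge0$, the restriction $n\ge2$ being inessential here.
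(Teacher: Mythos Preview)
Your argument is correct and is essentially the same as the paper's: both invoke the representation $B_{n}(z)=E\bigl(\imath L+z-\tfrac{1}{2}\bigr)^{n}$, swap $\int_{x}^{y}$ with $E$, integrate the power, and read off $B_{n+1}(y)$ and $B_{n+1}(x)$. Your explicit Fubini justification and the remark that the restriction $n\ge2$ is inessential are welcome additions but do not change the route.
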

\begin{proof}
Using the probabilistic representation \eqref{eq:Bn(x)Sun}, \begin{eqnarray*}
\int_{x}^{y}B_{n}\left(z\right)dz & = & E\int_{x}^{y}\left(\imath L+z-\frac{1}{2}\right)^{n}\\
 & = & \frac{1}{n+1}\left(E\left(\imath L+y+\frac{1}{2}\right)^{n+1}-E\left(\imath L+x+\frac{1}{2}\right)^{n+1}\right)\\
 & = & \frac{B_{n+1}\left(y\right)-B_{n+1}\left(x\right)}{n+1}.\end{eqnarray*}
 
\end{proof}
An equivalent version of Theorem \ref{thm:Bernoulli integration}
is \[
\frac{d}{dz}B_{n}\left(z\right)=nB_{n-1}\left(z\right)\]
which shows that Bernoulli polynomials are Appell polynomials.

A related result is the following \cite[24.13.2]{NIST}
\begin{thm}
The Bernoulli polynomials satisfy\begin{equation}
\int_{x}^{x+1}B_{n}\left(z\right)dz=x^{n}.\label{eq:Bnxx+1}\end{equation}
\end{thm}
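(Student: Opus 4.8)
The plan is to argue exactly as in the proof of Theorem~\ref{thm:Bernoulli integration}: substitute the probabilistic representation~\eqref{eq:Bn(x)Sun}, exchange the expectation with the integral over $\left[x,x+1\right]$, and integrate the resulting polynomial in $z$. This gives
\[
\int_{x}^{x+1}B_{n}\left(z\right)dz=\frac{1}{n+1}\left(E\left(\imath L+x+\frac{1}{2}\right)^{n+1}-E\left(\imath L+x-\frac{1}{2}\right)^{n+1}\right).
\]
By~\eqref{eq:Bn(x)Sun} the second expectation is $B_{n+1}\left(x\right)$, and, reading the same representation with argument $x+1$ in place of $x$, the first expectation is $B_{n+1}\left(x+1\right)$. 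Hence~\eqref{eq:Bnxx+1} is equivalent to the difference equation
\[
B_{n+1}\left(x+1\right)-B_{n+1}\left(x\right)=\left(n+1\right)x^{n}.
\]

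To establish this last identity I would compute the exponential generating function of $\left\{B_{m}\left(x+1\right)-B_{m}\left(x\right)\right\}_{m\ge0}$, using $E e^{\imath tL}=\frac{t/2}{\sinh\left(t/2\right)}$ as in the proof of Proposition~\ref{pro:proposition1}:
\[
\sum_{m=0}^{+\infty}\left(B_{m}\left(x+1\right)-B_{m}\left(x\right)\right)\frac{t^{m}}{m!}=\left(e^{t\left(x+\frac{1}{2}\right)}-e^{t\left(x-\frac{1}{2}\right)}\right)\frac{t/2}{\sinh\left(t/2\right)}=t e^{tx}.
\]
Since $t e^{tx}=\sum_{m=1}^{+\infty}m x^{m-1}\frac{t^{m}}{m!}$, comparing coefficients yields $B_{m}\left(x+1\right)-B_{m}\left(x\right)=m x^{m-1}$ for all $m\ge1$; taking $m=n+1$ closes the argument.

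The only substantive step is the generating-function computation that collapses $\left(e^{t/2}-e^{-t/2}\right)\frac{t/2}{\sinh\left(t/2\right)}$ to $t$ and recognizes $t e^{tx}$ as the EGF of $\left\{m x^{m-1}\right\}$; this is the same manipulation already used for Proposition~\ref{pro:proposition1} (the case $x=0$) and presents no real difficulty. I would also remark that, because the integration step above is valid for every $n\ge0$, this proof of~\eqref{eq:Bnxx+1} does not inherit the restriction $n\ge2$ of Theorem~\ref{thm:Bernoulli integration}; equivalently, one may simply invoke Theorem~\ref{thm:Bernoulli integration} with $y=x+1$ for $n\ge2$ and check the two remaining cases $n=0,1$ directly.
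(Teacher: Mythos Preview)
Your proof is correct and begins exactly as the paper does: use the probabilistic representation (equivalently Theorem~\ref{thm:Bernoulli integration}) to rewrite the integral as $\frac{1}{n+1}\bigl(E(\imath L+x+\tfrac12)^{n+1}-E(\imath L+x-\tfrac12)^{n+1}\bigr)=\frac{B_{n+1}(x+1)-B_{n+1}(x)}{n+1}$. The divergence is only in how the difference $B_{n+1}(x+1)-B_{n+1}(x)=(n+1)x^{n}$ is verified. The paper expands each $(n+1)$-st power by the binomial theorem in powers of $x$ and invokes Proposition~\ref{pro:[Gessel,-(7.2)]} (that $E(\imath L+\tfrac12)^{m}=E(\imath L-\tfrac12)^{m}$ for $m\neq1$) to kill every term except $k=n$. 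You instead compute the exponential generating function in $t$ via the characteristic function of $L$, collapsing $(e^{t/2}-e^{-t/2})\cdot\frac{t/2}{\sinh(t/2)}$ to $t$. Both routes are short; the paper's stays closer to the umbral/probabilistic theme by reusing Proposition~\ref{pro:[Gessel,-(7.2)]}, while yours is marginally more self-contained and, as you note, handles all $n\ge0$ without separate inspection of small cases.
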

\begin{proof}
From Thm \ref{thm:Bernoulli integration}, this integral is equal
to\[
\frac{B_{n+1}\left(x+1\right)-B_{n+1}\left(x\right)}{n+1}=\frac{E\left(iL+x+\frac{1}{2}\right)^{n+1}-E\left(iL+x-\frac{1}{2}\right)^{n+1}}{n+1}.\]
Expanding each power with the binomial formula yields\[
\frac{1}{n+1}\sum_{k=0}^{n+1}\binom{n+1}{k}x^{k}\left\{ E\left(iL+\frac{1}{2}\right)^{n+1-k}-E\left(iL-\frac{1}{2}\right)^{n+1-k}\right\} .\]
By \eqref{pro:[Gessel,-(7.2)]}, all terms vanish except for $k=n,$
which yields the result.
\end{proof}
Another identity, used by Gessel in the proof of Kaneko's formula,
and also proved in \cite{Chen} using the extended Zeilberger's algorithm,
reads
\begin{thm}
The Bernoulli numbers satisfy the following identity\[
\sum_{k=0}^{m}\binom{m}{k}B_{n+k}=\left(-1\right)^{m+n}\sum_{k=0}^{n}\binom{n}{k}B_{m+k.}\]
\end{thm}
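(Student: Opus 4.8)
The plan is to mimic the probabilistic proof of Kaneko's identity given above, using the representation $B_{m}=E\left(\imath L-\frac{1}{2}\right)^{m}$ from \eqref{eq:Bn} together with the symmetry of $L$. First I would rewrite both sides as expectations: since $\sum_{k=0}^{m}\binom{m}{k}z^{n+k}=z^{n}(1+z)^{m}$, setting $z=\imath L-\frac{1}{2}$ gives
\[
\sum_{k=0}^{m}\binom{m}{k}B_{n+k}=E\left(\imath L-\tfrac{1}{2}\right)^{n}\left(\imath L+\tfrac{1}{2}\right)^{m},
\]
and symmetrically the right-hand side, up to the factor $(-1)^{m+n}$, equals $E\left(\imath L-\frac{1}{2}\right)^{m}\left(\imath L+\frac{1}{2}\right)^{n}$.

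Next I would exploit the symmetry of $L$ (that $-L$ is distributed as $L$). Replacing $L$ by $-L$ in the first expectation sends $\imath L-\frac{1}{2}\mapsto -\imath L-\frac{1}{2}=-(\imath L+\frac{1}{2})$ and $\imath L+\frac{1}{2}\mapsto -(\imath L-\frac{1}{2})$, so
\[
E\left(\imath L-\tfrac{1}{2}\right)^{n}\left(\imath L+\tfrac{1}{2}\right)^{m}=(-1)^{m+n}E\left(\imath L+\tfrac{1}{2}\right)^{n}\left(\imath L-\tfrac{1}{2}\right)^{m},
\]
which is exactly $(-1)^{m+n}$ times the expectation representing the right-hand sum. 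This gives the identity directly.

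The one subtlety — and the step I expect to need the most care — is justifying the interchange of $E$ with the finite binomial sums and, more importantly, making sure the expectations $E\left(\imath L-\frac{1}{2}\right)^{n}\left(\imath L+\frac{1}{2}\right)^{m}$ are finite so that the substitution $L\mapsto -L$ is legitimate. Since $L$ has all polynomial moments finite (its density $\frac{\pi}{2}\,\mathrm{sech}^{2}(\pi x)$ decays exponentially), each such expectation is an absolutely convergent integral of a polynomial in $L$, so all manipulations are valid for every $m,n\ge 0$; in particular no range restriction like $n\neq 1$ is needed here, unlike in Propositions \ref{pro:[Gessel,-(7.2)]} and \ref{pro:The-Bernoulli-numbers}, because we never single out a linear term in $L$. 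I would close by noting that, alternatively, one can package both sides into generating functions as in the proof of Proposition \ref{pro:proposition1} and check equality of the resulting functions of two variables, but the symmetry argument is shorter.
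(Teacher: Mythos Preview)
Your proposal is correct and follows essentially the same route as the paper: rewrite each side as $E\bigl(\imath L-\tfrac12\bigr)^{n}\bigl(\imath L+\tfrac12\bigr)^{m}$ (respectively with $m,n$ swapped) via the binomial theorem, and then use the symmetry $L\overset{d}{=}-L$ to pick up the factor $(-1)^{m+n}$. Your additional remarks on moment finiteness and the optional generating-function check are sound but not needed for the argument.
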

\begin{proof}
The proof is straightforward using \eqref{eq:Bn(x)Sun} since the
left-hand side is \[
\sum_{k=0}^{m}\binom{m}{k}B_{n+k}=E\left(\imath L-\frac{1}{2}\right)^{n}\sum_{k=0}^{m}\binom{m}{k}\left(\imath L-\frac{1}{2}\right)^{k}=E\left(\imath L-\frac{1}{2}\right)^{n}\left(\imath L+\frac{1}{2}\right)^{m}\]
while the right-hand side is $ $$E\left(\imath L-\frac{1}{2}\right)^{m}\left(\imath L+\frac{1}{2}\right)^{n}.$
Since $L$ is distributed as $-L$,\begin{eqnarray*}
E\left(\imath L-\frac{1}{2}\right)^{n}\left(\imath L+\frac{1}{2}\right)^{m} & = & E\left(-\imath L-\frac{1}{2}\right)^{n}\left(-\imath L+\frac{1}{2}\right)^{m}\\
 & = & \left(-1\right)^{m+n}E\left(\imath L+\frac{1}{2}\right)^{n}\left(\imath L-\frac{1}{2}\right)^{m}\end{eqnarray*}
which concludes the proof.
\end{proof}
In \cite{Chen}, K.W. Chen considers the sequence of numbers $\left\{ K_{n}\right\} ,\,\, n\ge0$
defined as \[
K_{n}=\sum_{i=0}^{n}\binom{n}{i}B_{n+i+1},\]
and proves the following
\begin{thm}
{[}Chen{]} The sequence $\left\{ K_{n}\right\} $ satisfies the identities

\[
\sum_{k=0}^{n}\binom{2n-k}{k}\frac{2n}{2n-k}K_{k}=-B_{2n},\,\, n\ge1\]
and \[
\sum_{k=0}^{n-1}\binom{2n-k-1}{k}\frac{2n-1}{2n-k-1}K_{k}=B_{2n-1},\,\, n\ge1.\]
\end{thm}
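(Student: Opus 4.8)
\emph{Step 1 (a probabilistic model for $K_n$).} The plan is to pass to a probabilistic representation of the numbers $K_n$ and then recognize each of Chen's two sums as an instance of the classical Girard--Waring power-sum formula. First I would, starting from $B_m=E\left(\imath L-\tfrac12\right)^m$ (equation \eqref{eq:Bn}) and the binomial theorem, write
\[
K_n=\sum_{i=0}^n\binom ni B_{n+i+1}=E\left(\imath L-\tfrac12\right)^{n+1}\sum_{i=0}^n\binom ni\left(\imath L-\tfrac12\right)^i=E\left(\imath L-\tfrac12\right)^{n+1}\left(\imath L+\tfrac12\right)^n.
\]
Since $\left(\imath L-\tfrac12\right)\left(\imath L+\tfrac12\right)=-L^2-\tfrac14$, the integrand equals $\left(\imath L-\tfrac12\right)\left(-L^2-\tfrac14\right)^n$; here $\imath L$ is an odd function of $L$ while $-L^2-\tfrac14$ is even, so the symmetry of $L$ kills the $\imath L$ term and leaves
\[
K_n=-\frac12\,E\left(-L^2-\tfrac14\right)^n,\qquad n\ge0.
\]

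\emph{Step 2 (Girard--Waring).} Next I would invoke the identity that, for indeterminates with $\alpha+\beta=1$ and $\alpha\beta=e$,
\[
\alpha^\ell+\beta^\ell=\sum_{k=0}^{\lfloor\ell/2\rfloor}(-1)^k\frac{\ell}{\ell-k}\binom{\ell-k}{k}e^{\,k},
\]
applied with $\alpha=\imath L+\tfrac12$ and $\beta=\tfrac12-\imath L$. Then $\alpha+\beta=1$ and $e=\alpha\beta=L^2+\tfrac14=-\left(-L^2-\tfrac14\right)$, so $e^{\,k}=(-1)^k\left(-L^2-\tfrac14\right)^k$ and the sign $(-1)^k$ cancels the one in the formula; using also $\beta^{2n}=\left(\imath L-\tfrac12\right)^{2n}$ and $\beta^{2n-1}=-\left(\imath L-\tfrac12\right)^{2n-1}$, the cases $\ell=2n$ and $\ell=2n-1$ become, respectively,
\[
\sum_{k=0}^{n}\binom{2n-k}{k}\frac{2n}{2n-k}\left(-L^2-\tfrac14\right)^k=\left(\imath L+\tfrac12\right)^{2n}+\left(\imath L-\tfrac12\right)^{2n},
\]
\[
\sum_{k=0}^{n-1}\binom{2n-k-1}{k}\frac{2n-1}{2n-k-1}\left(-L^2-\tfrac14\right)^k=\left(\imath L+\tfrac12\right)^{2n-1}-\left(\imath L-\tfrac12\right)^{2n-1}.
\]

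\emph{Step 3 (take expectations).} Finally I would multiply both identities by $-\tfrac12$ and take expectations: by Step 1 the left-hand sides become exactly the two sums in Chen's theorem, while on the right I would use $E\left(\imath L-\tfrac12\right)^m=B_m$ together with Proposition \ref{pro:proposition1}, which gives $E\left(\imath L+\tfrac12\right)^m=B_m$ for all $m\ne1$. For the first identity $2n\ge2$, hence the right side integrates to $-\tfrac12(B_{2n}+B_{2n})=-B_{2n}$. For the second identity with $n\ge2$ one has $2n-1\ge3$, so both terms integrate to $B_{2n-1}=0$ and the claim reduces to $0=B_{2n-1}$; the remaining case $n=1$ is the direct identity $K_0=B_1=-\tfrac12$ (equivalently, $-\tfrac12\,E\left[\left(\imath L+\tfrac12\right)-\left(\imath L-\tfrac12\right)\right]=-\tfrac12=B_1$).

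The only real obstacle I anticipate is the sign bookkeeping in Step 2 --- verifying that the alternating Girard--Waring coefficients $(-1)^k\frac{\ell}{\ell-k}\binom{\ell-k}{k}$ really collapse onto the positive coefficients $\binom{2n-k}{k}\frac{2n}{2n-k}$ of Chen's identities, which works precisely because $\alpha\beta=-\left(-L^2-\tfrac14\right)$ supplies a compensating $(-1)^k$. One should also bear in mind that $E\left(\imath L+\tfrac12\right)^m=B_m$ fails at $m=1$, which is exactly why the odd identity must be checked separately when $n=1$.
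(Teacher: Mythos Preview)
Your proof is correct and follows essentially the same route as the paper: represent $K_k$ probabilistically through the logistic variable $L$, feed the resulting expression into the Girard--Waring (Lucas-type) identity to rewrite Chen's sum as a combination of $(\imath L\pm\tfrac12)^{\ell}$, and evaluate using Proposition~\ref{pro:proposition1}. The only cosmetic difference is organizational: the paper keeps the extra factor $U=\imath L-\tfrac12$ in $K_k=E\bigl[U\,(UV)^k\bigr]$ and multiplies it through after applying the polynomial identity, whereas you use the symmetry of $L$ at the outset to reduce to $K_k=-\tfrac12\,E(-L^2-\tfrac14)^k$, which makes Step~3 a touch cleaner and also gives a uniform treatment of the odd case (the paper just states that ``the proof of the second identity is equally easy'').
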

\begin{proof}
We give a short proof of the first identity remarking that $K_{n}=E\left(\left(\imath L-\frac{1}{2}\right)^{n+1}\left(\imath L+\frac{1}{2}\right)^{n}\right)$
and that \[
\sum_{k=0}^{n}\binom{2n-k}{k}\frac{2n}{2n-k}X^{k}=\left(\frac{1}{2}+X-\frac{\sqrt{1-4X}}{2}\right)^{n}+\left(\frac{1}{2}+X+\frac{\sqrt{1-4X}}{2}\right)^{n}\]
so that \[
\sum_{k=0}^{n}\binom{2n-k}{k}\frac{2n}{2n-k}K_{k}=EU\left(\left(\frac{1}{2}+UV-\frac{\sqrt{1-4UV}}{2}\right)^{n}+\left(\frac{1}{2}+UV+\frac{\sqrt{1-4UV}}{2}\right)^{n}\right)\]
with $U=\imath L-\frac{1}{2}$ and $V=\imath L+\frac{1}{2}$ so that
$UV=-L^{2}-\frac{1}{4}$ and $\sqrt{1-4UV}=\imath L$. Finally,\begin{eqnarray*}
\sum_{k=0}^{n}\binom{2n-k}{k}\frac{2n}{2n-k}K_{k} & = & E\left(\imath L-\frac{1}{2}\right)\left(\left(-\imath L+\frac{1}{2}\right)^{2n}+\left(\imath L+\frac{1}{2}\right)^{2n}\right)\\
 & = & E\left(\imath L-\frac{1}{2}\right)^{2n+1}+E\left(\imath L-\frac{1}{2}\right)E\left(\imath L+\frac{1}{2}\right)^{2n}.\end{eqnarray*}
The first term is $B_{2n+1}=0$ while the second reads\[
E\left(\imath L-\frac{1}{2}\right)E\left(\imath L+\frac{1}{2}\right)^{2n}=\left(E\left(\imath L+\frac{1}{2}\right)-1\right)E\left(\imath L+\frac{1}{2}\right)^{2n}\]
which is equal to $B_{2n+1}-B_{2n}=-B_{2n}$ since $n\ge1.$ The proof
of the second identity is equally easy.
\end{proof}
A last and less easy identity is the following from \cite{Gessel2}.
\begin{thm}
{[}Gessel{]} Denote the power sum polynomial\begin{equation}
S_{k}\left(n\right)=\sum_{i=1}^{n}i^{k}=\sum_{i=0}^{k}\left(-1\right)^{k-i}\binom{k}{i}\frac{n^{i+1}}{i+1}B_{k-i}.\label{eq:Skn}\end{equation}
Then, with $a\in\mathbb{N},\,\, a\ge1$ and $n\in\mathbb{N},$\[
B_{n}=\frac{1}{a\left(1-a^{n}\right)}\sum_{k=0}^{n-1}a^{k}\binom{n}{k}B_{k}S_{n-k}\left(a-1\right).\]

\end{thm}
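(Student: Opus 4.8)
The plan is to peel the right-hand side down to the probabilistic representation $B_{n}=E\left(\imath L-\frac{1}{2}\right)^{n}$ of \eqref{eq:Bn} and then to evaluate the resulting expectation by a Bernoulli multiplication (Raabe-type) identity, itself proved by the characteristic-function computation already used for Proposition \ref{pro:proposition1}.

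First I would rewrite the power sums. Since $k$ runs only from $0$ to $n-1$, the exponent $m=n-k$ is always $\ge 1$, so $S_{n-k}\left(a-1\right)=\sum_{i=1}^{a-1}i^{n-k}=\sum_{i=0}^{a-1}i^{n-k}$ (the $i=0$ term contributes nothing). Writing $Y=\imath L-\frac{1}{2}$, so that $a^{k}B_{k}=E\left(aY\right)^{k}$, substituting $B_{k}=EY^{k}$ and exchanging the two finite summations, the right-hand side of the claimed identity becomes
\[
\sum_{i=0}^{a-1}E\sum_{k=0}^{n-1}\binom{n}{k}\left(aY\right)^{k}i^{n-k}=\sum_{i=0}^{a-1}E\left[\left(aY+i\right)^{n}-\left(aY\right)^{n}\right]=E\sum_{i=0}^{a-1}\left(aY+i\right)^{n}-a^{n+1}B_{n},
\]
where the middle step is the binomial theorem with the $k=n$ term removed, and the last step uses $E\left(aY\right)^{n}=a^{n}B_{n}$ together with the fact that there are $a$ indices $i$.

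The heart of the matter is then the identity
\[
E\sum_{i=0}^{a-1}\left(aY+i\right)^{n}=aB_{n},
\]
which is the probabilistic incarnation of the Bernoulli multiplication theorem. I would establish it precisely as in the proof of Proposition \ref{pro:proposition1}: pass to exponential generating functions, use $\sum_{i=0}^{a-1}e^{ti}=\frac{e^{at}-1}{e^{t}-1}$ and $Ee^{atY}=e^{-at/2}Ee^{\imath\left(at\right)L}=\frac{at}{e^{at}-1}$ (the computation of Proposition \ref{pro:proposition1} with $t$ replaced by $at$), so that
\[
\sum_{n\ge0}\frac{t^{n}}{n!}E\sum_{i=0}^{a-1}\left(aY+i\right)^{n}=\frac{at}{e^{at}-1}\cdot\frac{e^{at}-1}{e^{t}-1}=\frac{at}{e^{t}-1}=a\sum_{n\ge0}B_{n}\frac{t^{n}}{n!},
\]
and identify coefficients. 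Substituting back, the right-hand side of the theorem equals $aB_{n}-a^{n+1}B_{n}=a\left(1-a^{n}\right)B_{n}$, and dividing by $a\left(1-a^{n}\right)$ completes the argument.

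The only genuinely nontrivial ingredient is the multiplication identity $E\sum_{i=0}^{a-1}\left(aY+i\right)^{n}=aB_{n}$; once the generating-function manipulation is set up it collapses in one line, and everything else is bookkeeping --- in particular noticing that the truncation $k\le n-1$ is exactly what lets one replace $S_{n-k}\left(a-1\right)$ by $\sum_{i=0}^{a-1}i^{n-k}$. I would also remark in passing that the formula tacitly assumes $a^{n}\ne1$ (so effectively $a\ge2$); at $a=1$ the rearranged identity reads $0=0$.
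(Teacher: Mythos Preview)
Your argument is correct. The substitution $S_{n-k}(a-1)=\sum_{i=0}^{a-1}i^{n-k}$ is legitimate precisely because $n-k\ge 1$ on the range of summation, the binomial repackaging into $\sum_{i=0}^{a-1}(aY+i)^{n}-a^{n+1}B_{n}$ is clean, and the generating-function computation of $E\sum_{i=0}^{a-1}(aY+i)^{n}=aB_{n}$ is exactly Raabe's multiplication theorem $\sum_{i=0}^{a-1}B_{n}(i/a)=a^{1-n}B_{n}$ in probabilistic clothing.

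The paper proceeds rather differently. It first proves the lemma $S_{k}(n)=\int_{0}^{n}B_{k}(z+1)\,dz$, extends the sum to $k=n$ (picking up the extra term $a^{n}(a-1)B_{n}$), introduces a second independent copy $L_{2}$ so that the full sum becomes $\int_{0}^{a-1}E_{L_{1},L_{2}}\bigl[a(\imath L_{1}-\tfrac12)+(\imath L_{2}+z+\tfrac12)\bigr]^{n}dz$, integrates to a difference of $(n+1)$-st powers, expands binomially, and then invokes the cancellation $E(\imath L+\tfrac12)^{m}=E(\imath L-\tfrac12)^{m}$ for $m\ne 1$ (Proposition~\ref{pro:[Gessel,-(7.2)]}) to kill all but two terms. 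Your route is shorter and more self-contained: it sidesteps the integral lemma, the second random variable, and the binomial cancellation, trading them for one generating-function identity. The paper's route, on the other hand, stays purely inside the expectation formalism already built up and makes no appeal to the characteristic function beyond Proposition~\ref{pro:proposition1}; it also illustrates the device of coupling two independent copies of $L$, which is used elsewhere in the paper. Your closing remark that the statement tacitly requires $a^{n}\ne 1$ is well taken.
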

A quick proof can be given using the following
\begin{lem}
The power sum polynomial has integral representation $\forall k,\forall n\in\mathbb{N},$\[
S_{k}\left(n\right)=\int_{0}^{n}B_{k}\left(z+1\right)dz.\]
\end{lem}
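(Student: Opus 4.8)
The plan is to reduce $\int_0^n B_k(z+1)\,dz$ to a telescoping sum over unit intervals and then to invoke the already‑established identity~\eqref{eq:Bnxx+1}. First I would split the range of integration and shift the variable, writing
$$\int_0^n B_k(z+1)\,dz=\sum_{i=0}^{n-1}\int_i^{i+1}B_k(z+1)\,dz=\sum_{i=0}^{n-1}\int_{i+1}^{i+2}B_k(w)\,dw ,$$
where the last equality is the substitution $w=z+1$ carried out in each summand.

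Next I would apply~\eqref{eq:Bnxx+1} with $x=i+1$ to each term, which gives $\int_{i+1}^{i+2}B_k(w)\,dw=(i+1)^k$, and therefore
$$\int_0^n B_k(z+1)\,dz=\sum_{i=0}^{n-1}(i+1)^k=\sum_{j=1}^{n}j^k=S_k(n),$$
as claimed. An equivalent route that bypasses~\eqref{eq:Bnxx+1} is to argue directly from the probabilistic representation~\eqref{eq:Bn(x)Sun}: by linearity of $E$ and Theorem~\ref{thm:Bernoulli integration}, $\int_0^n B_k(z+1)\,dz=\frac{1}{k+1}E\bigl[(\imath L+n+\frac{1}{2})^{k+1}-(\imath L+\frac{1}{2})^{k+1}\bigr]=\frac{B_{k+1}(n+1)-B_{k+1}(1)}{k+1}$, and then one recognises the classical closed form~\eqref{eq:Skn} of $S_k(n)$ after a binomial expansion of $B_{k+1}(n+1)$, using $B_{k+1}(1)=B_{k+1}$ for $k\ge1$ (Proposition~\ref{pro:[Gessel,-(7.2)]}) together with the trivial case $k=0$.

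There is no substantial obstacle here; the argument is essentially one line of telescoping. The only point that deserves a little care is the behaviour at small $k$: several of the underlying umbral relations, and in particular Proposition~\ref{pro:[Gessel,-(7.2)]} used in the proof of~\eqref{eq:Bnxx+1}, are stated with the restriction $n\neq1$. A direct computation shows that~\eqref{eq:Bnxx+1}, and hence the telescoping identity above, nevertheless holds for $k=0$ and $k=1$ as well, while the degenerate case $n=0$ is immediate since both sides vanish. Thus the expected difficulty is bookkeeping about edge cases rather than anything conceptual.
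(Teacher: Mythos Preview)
Your telescoping argument is correct and is exactly the alternative proof the paper alludes to in the sentence following the lemma (``Another simple proof of this lemma can obviously be deduced from identity~\eqref{eq:Bnxx+1}''), but it is not the proof the paper actually gives. The paper instead treats $n$ as a real variable in the closed form~\eqref{eq:Skn}, differentiates $S_k(n)$ with respect to $n$, uses the probabilistic representation~\eqref{eq:Bn} and the symmetry of $L$ to identify the derivative as $E_L(\imath L+n+\tfrac12)^k=B_k(n+1)$, and then integrates back from $0$ (where $S_k$ vanishes) to $n$.

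Your approach is the more direct one once~\eqref{eq:Bnxx+1} is in hand: it avoids extending $S_k$ to real arguments and differentiating, and it makes the connection with the defining sum $\sum_{j=1}^n j^k$ transparent. The paper's approach, by contrast, stays closer to the probabilistic theme and does not need to split the integral into unit pieces; it also yields the stronger statement that the identity holds for real $n$ (with $S_k$ defined by the polynomial~\eqref{eq:Skn}), which is what is actually used immediately afterwards in the proof of Gessel's theorem where the upper limit is $a-1$. Your secondary route via Theorem~\ref{thm:Bernoulli integration} and a binomial expansion is closer in spirit to the paper's argument, though the paper differentiates the $S_k$ side rather than integrating the $B_k$ side.
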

\begin{proof}
This can be verified considering $n\in\mathbb{R}$ in the right-hand
side of \eqref{eq:Skn} so that \begin{eqnarray*}
\frac{d}{dn}S_{k}\left(n\right) & = & E_{L}\sum_{i=0}^{k}\binom{k}{i}n^{i}\left(-1\right)^{k-i}\left(\imath L-\frac{1}{2}\right)^{k-i}=E_{L}\left(-\imath L+n+\frac{1}{2}\right)^{k}\\
 & = & E_{L}\left(\imath L+n+\frac{1}{2}\right)^{k}\end{eqnarray*}
and, since $S_{k}\left(0\right)=0,$\[
S_{k}\left(n\right)=\int_{0}^{n}E_{L}\left(\imath L+z+\frac{1}{2}\right)^{n}dz=\int_{0}^{n}B_{n}\left(z+1\right)dz\]
and the result holds in particular $\forall n\in\mathbb{N}.$
\end{proof}
Another simple proof of this lemma can obviously be deduced from identity
\eqref{eq:Bnxx+1}.

Using this integral representation, we compute now\[
\sum_{k=0}^{n-1}a^{k}\binom{n}{k}B_{k}S_{n-k}\left(a-1\right)=\sum_{k=0}^{n}a^{k}\binom{n}{k}B_{k}S_{n-k}\left(a-1\right)-a^{n}\left(a-1\right)B_{n}.\]
The right-hand side sum is\[
E_{L_{1}}\sum_{k=0}^{n}a^{k}\binom{n}{k}\left(\imath L_{1}-\frac{1}{2}\right)^{k}\int_{0}^{a-1}B_{n-k}\left(z+1\right)dz=\int_{0}^{a-1}E_{L_{1},L_{2}}\left[a\left(\imath L_{1}-\frac{1}{2}\right)+\left(\imath L_{2}+z+\frac{1}{2}\right)\right]^{n}dz\]
which can be integrated as\[
\frac{1}{n+1}E_{L_{1},L_{2}}\left[\left(a\left(\imath L_{1}-\frac{1}{2}\right)-\left(\imath L_{2}+a-\frac{1}{2}\right)\right)^{n+1}-\left(a\left(\imath L_{1}-\frac{1}{2}\right)-\left(\imath L_{2}+\frac{1}{2}\right)\right)^{n+1}\right].\]
Expanding both $\left(n+1\right)$ powers, we obtain \[
\frac{1}{n+1}E_{L_{1},L_{2}}\sum_{k=0}^{n+1}\binom{n+1}{k}a^{n+1-k}\left(\left(\imath L_{1}+\frac{1}{2}\right)^{n+1-k}\left(\imath L_{2}-\frac{1}{2}\right)^{k}-\left(\imath L_{1}-\frac{1}{2}\right)^{n+1-k}\left(\imath L_{2}+\frac{1}{2}\right)^{k}\right).\]
But by \eqref{pro:[Gessel,-(7.2)]}, all terms cancel in this sum
except for $k=1$ and $k=n$ in which cases they add up, so that we
obtain\[
\frac{1}{n+1}\left(-\left(n+1\right)a^{n}B_{n}\right)+\frac{1}{n+1}\left(\left(n+1\right)aB_{n}\right)=B_{n}a\left(1-a^{n-1}\right)\]
which, adding the remaining term $-a^{n}\left(a-1\right)B_{n},$ yields
the result.

\section{Euler numbers and Euler polynomials}

\subsection{definition and characterisation}

We derive here analogous results for Euler numbers and Euler polynomials.
A generating function for the sequence $\left\{ E_{n}\right\} $ of
Euler numbers is\[
\sum_{n=0}^{+\infty}E_{n}\frac{t^{n}}{n!}=\text{sech}\left(t\right)\]
and for the Euler poynomials $\left\{ E_{n}\left(x\right)\right\} $\[
\sum_{n=0}^{+\infty}E_{n}\left(x\right)\frac{t^{n}}{n!}=\frac{2e^{tx}}{e^{t}+1}.\]
The Euler numbers are obtained as \[
E_{n}=\frac{1}{2^{n}}E_{n}\left(\frac{1}{2}\right).\]
First values are\[
E_{0}\left(x\right)=1;\,\, E_{1}\left(x\right)=x-\frac{1}{2};\,\, E_{2}\left(x\right)=x^{2}-x;\,\, E_{3}\left(x\right)=x^{3}-\frac{3}{2}x^{2}+\frac{1}{4}\]
and\[
E_{0}=1;\,\, E_{1}=0;\,\, E_{2}=-1;\,\, E_{3}=0;\,\, E_{4}=5.\]
 In \cite{Sun}, the following formula is derived
\begin{thm}
{[}Sun{]} If $L_{0}$ is defined as \begin{equation}
L_{0}=\sum_{k=1}^{+\infty}\frac{L_{k}}{\left(2k-1\right)\pi}\label{eq:L0-1}\end{equation}
where $L_{k}$ are independent and Laplace distributed, then the Euler
polynomials read\begin{equation}
E_{n}\left(x\right)=E\left(\imath L_{0}+x-\frac{1}{2}\right)^{n}\label{eq:EnL0}\end{equation}
and the Euler numbers\[
E_{n}=2^{n}E_{n}\left(\frac{1}{2}\right)=2^{n}E\left(\imath L_{0}\right)^{n}.\]

\end{thm}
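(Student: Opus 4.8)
The plan is to reproduce, for $L_{0}$, the generating-function computation carried out above for $L$. First I would record that $L_{0}$ is well defined: a Laplace variable has variance $2$, so $L_{0}$ has variance $\sum_{k\ge1}\frac{2}{(2k-1)^{2}\pi^{2}}=\frac{2}{\pi^{2}}\cdot\frac{\pi^{2}}{8}=\frac14<\infty$, whence the series \eqref{eq:L0-1} converges almost surely and in $L^{2}$.

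Next I would compute the characteristic function of $L_{0}$. Each $L_{k}$ has characteristic function $\frac{1}{1+t^{2}}$, so by independence
\[
E\,e^{\imath t L_{0}}=\prod_{k=1}^{+\infty}\frac{1}{1+\frac{t^{2}}{(2k-1)^{2}\pi^{2}}}.
\]
The analytic input needed is the Euler product for the hyperbolic cosine, $\cosh z=\prod_{k\ge1}\bigl(1+\tfrac{4z^{2}}{(2k-1)^{2}\pi^{2}}\bigr)$ (equivalently the classical product for $\cos$); taking $z=t/2$ gives $\prod_{k\ge1}\bigl(1+\tfrac{t^{2}}{(2k-1)^{2}\pi^{2}}\bigr)=\cosh(t/2)$, hence $E\,e^{\imath t L_{0}}=\text{sech}(t/2)$. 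As in the Bernoulli case this also identifies the law of $L_{0}$: it is a scaled hyperbolic-secant variable, with density $f_{L_{0}}(x)=\text{sech}(\pi x)$, $x\in\mathbb{R}$.

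Then I would expand the proposed representation and interchange sum and expectation:
\[
\sum_{n=0}^{+\infty}E\left(\imath L_{0}+x-\tfrac12\right)^{n}\frac{t^{n}}{n!}=e^{t\left(x-\frac12\right)}E\,e^{\imath t L_{0}}=e^{t\left(x-\frac12\right)}\text{sech}\!\left(\tfrac t2\right)=\frac{2e^{tx}}{e^{t}+1},
\]
which is exactly the generating function of the Euler polynomials; matching Taylor coefficients yields \eqref{eq:EnL0}. Specialising to $x=\tfrac12$ gives $E_{n}(\tfrac12)=E(\imath L_{0})^{n}$, hence $E_{n}=2^{n}E_{n}(\tfrac12)=2^{n}E(\imath L_{0})^{n}$ (consistently, $\sum_{n}2^{n}E(\imath L_{0})^{n}\frac{t^{n}}{n!}=E\,e^{2\imath t L_{0}}=\text{sech}(t)$, the generating function of the Euler numbers).

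The one point requiring care is the legitimacy of exchanging $\sum$ and $E$. Since $\text{sech}(\pi x)\sim2e^{-\pi|x|}$ as $|x|\to\infty$, the moment generating function $E\,e^{sL_{0}}$ is finite precisely for $|s|<\pi$, so the interchange, and the resulting power-series identity, are valid for $|t|<\pi$ --- which is also the radius of convergence of $2e^{tx}/(e^{t}+1)$ --- and that suffices to identify all the coefficients. I expect the only genuine obstacle to be locating and invoking the cosine / hyperbolic-cosine product formula cleanly; the rest is routine.
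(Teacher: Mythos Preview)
Your argument is correct and complete. Note, however, that the paper does not actually prove this theorem: it is quoted from Sun and stated without proof, so there is no ``paper's own proof'' to compare against. What the paper does do, in the theorem immediately following, is assert that $E\,e^{\imath tL_{0}}=\text{sech}(t/2)$ and then identify the density via a Fourier table; you go further and actually derive that characteristic function from the Euler product for $\cosh$, which the paper omits. Your generating-function computation $e^{t(x-1/2)}\text{sech}(t/2)=2e^{tx}/(e^{t}+1)$ is exactly parallel to the Bernoulli computation the paper carries out in the proof of Proposition~\ref{pro:proposition1}, and your attention to the domain of validity ($|t|<\pi$) and the $L^{2}$ convergence of the defining series are details the paper does not address. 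In short, you have supplied a self-contained proof where the paper only cites one.
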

We provide a more convenient characterization of the random variable
$L_{0}$ as follows.
\begin{thm}
The random variable $L_{0}$ follows the hyperbolic secant distribution\[
f_{L_{0}}\left(x\right)=\text{sech}\left(\pi x\right).\]
\end{thm}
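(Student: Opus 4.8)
The plan is to follow exactly the pattern used above for the logistic variable $L$: compute the characteristic function of $L_{0}$ by exploiting the independence of the summands in \eqref{eq:L0-1}, recognise it in closed form, and then identify the density by quoting the appropriate Fourier transform. So first I would recall that a Laplace variable with density $\tfrac12 e^{-\vert x\vert}$ has characteristic function $E\,e^{\imath s L_{k}}=(1+s^{2})^{-1}$. Since the $L_{k}$ are independent and the coefficients $1/((2k-1)\pi)$ are square--summable (so the series \eqref{eq:L0-1} converges almost surely), the characteristic function of $L_{0}$ factorises as
\[
E\,e^{\imath t L_{0}}=\prod_{k=1}^{+\infty}\frac{1}{1+\dfrac{t^{2}}{(2k-1)^{2}\pi^{2}}}.
\]

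Next I would invoke the classical Euler product for the hyperbolic cosine,
\[
\cosh z=\prod_{k=1}^{+\infty}\left(1+\frac{4z^{2}}{(2k-1)^{2}\pi^{2}}\right),
\]
which follows from the product expansion of $\cos$ under $z\mapsto\imath z$. Taking $z=t/2$ collapses the product above to $\cosh(t/2)$, so that
\[
E\,e^{\imath t L_{0}}=\frac{1}{\cosh(t/2)}=\text{sech}\!\left(\frac{t}{2}\right),
\]
which, incidentally, is exactly what Sun's representation \eqref{eq:EnL0} together with the generating function $\sum E_{n}(x)t^{n}/n!=2e^{tx}/(e^{t}+1)$ demands.

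Finally, in the spirit of the $\text{sech}^{2}$ computation already used for $L$, I would quote from a table of cosine transforms the companion formula
\[
\int_{0}^{+\infty}\frac{\cos(xt)}{\cosh(a x)}\,dx=\frac{\pi}{2a}\,\text{sech}\!\left(\frac{\pi t}{2a}\right),
\]
so that with $a=\pi$ the function $x\mapsto\text{sech}(\pi x)$ has characteristic function $\text{sech}(t/2)$ and total mass $\int_{-\infty}^{+\infty}\text{sech}(\pi x)\,dx=1$; by uniqueness of Fourier inversion this is the density of $L_{0}$. The proof is essentially mechanical: the only points that need a word of care are the almost-sure convergence justifying the factorisation of the characteristic function, and locating the correct entry in the integral tables (the $\text{sech}$ analogue of the $\text{sech}^{2}$ identity from \cite{Bateman}); the product identity for $\cosh$ is standard.
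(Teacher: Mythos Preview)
Your proof is correct and follows essentially the same route as the paper: compute the characteristic function of $L_{0}$ as $\text{sech}(t/2)$ and then invert using the tabulated cosine transform of $\text{sech}(ax)$ (the paper cites the very formula you quote, \cite[1.9.1]{Bateman}). The only difference is that you actually \emph{derive} the characteristic function from \eqref{eq:L0-1} via the Euler product for $\cosh$, whereas the paper simply asserts it; your extra step is a welcome justification.
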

\begin{proof}
The characteristic function of $L_{0}$ is\[
Ee^{\imath L_{0}t}=\text{sech}\left(\frac{t}{2}\right).\]
From \cite[1.9.1]{Bateman}, \[
\int_{0}^{+\infty}\text{sech}\left(ax\right)\cos\left(xt\right)dx=\frac{\pi}{2a}\text{sech}\left(\frac{\pi}{2a}t\right)\]
so that, with $a=\pi,$ the density of $L_{0}$ is\[
f_{L_{0}}\left(x\right)=\text{sech}\left(\pi x\right).\]
Thus $\pi L_{0}$ follows an hyperbolic secant distribution.
\end{proof}
We note from \cite{Devroye} that the random variable $L_{0}$ can
be obtained as \begin{equation}
L_{0}=\frac{1}{\pi}\log\vert C\vert=\frac{1}{\pi}\left(\log\vert N_{1}\vert-\log\vert N_{2}\vert\right)\label{eq:L0}\end{equation}
where $C$ is Cauchy distributed and $N_{1}$ and $N_{2}$ are two
independent standard Gaussian random variables.

The random variable $L_{0}$ is also a scale mixture of Gaussian with
mixing distribution given in \cite{Barndorff}, the moment generating
function of which reads\[
\varphi\left(s\right)=\frac{1}{\cos\left(\pi\sqrt{2s}\right)}.\]
At last, the random variable $\imath L_{0}$ has the same moments
as the Lévy area - that is the signed area - of a Brownian motion
$B_{t}$ for $0\le t\le1,$ see \cite{Levin}.

We leave to the reader the proofs of basic identities such as \begin{equation}
E_{n}\left(1-x\right)=\left(-1\right)^{n}E_{n}\left(x\right),\label{eq:Euler1-x}\end{equation}
\[
\sum_{r=0}^{n}\binom{n}{r}E_{r}\left(x\right)=E_{n}\left(x+1\right)\]
and \begin{equation}
\sum_{r=0}^{n}\binom{2n}{2r}E_{2r}=0\label{eq:Eulersum2r}\end{equation}
 as a consequence of the probabilistic representation \eqref{eq:EnL0}:
the first identity is a consequence of the symmetry of the distribution
of $L_{0},$ the second is obtained using the binomial theorem.

\subsection{links between Bernoulli and Euler polynomials}

\subsubsection{a summation identity}

An interesting identity that links Bernoulli and Euler polynomials
is the following \cite[p. xxxiii]{Gradshteyn}\[
B_{n}\left(x\right)=\frac{1}{2^{n}}\sum_{k=0}^{n}\binom{n}{k}B_{n-k}E_{k}\left(2x\right)\]
and its more general version \cite[24.14.5]{NIST} \[
B_{n}\left(\frac{x+y}{2}\right)=\frac{1}{2^{n}}\sum_{k=0}^{n}\binom{n}{k}B_{n-k}\left(x\right)E_{k}\left(y\right).\]
The proof reads\[
\frac{1}{2^{n}}\sum_{k=0}^{n}\binom{n}{k}B_{n-k}\left(x\right)E_{k}\left(y\right)=\frac{1}{2^{n}}E\left(\imath L_{e}+x-\frac{1}{2}+\imath L_{0}+y-\frac{1}{2}\right)^{n}=E\left(\imath\frac{L_{0}+L_{e}}{2}+\frac{x+y}{2}-\frac{1}{2}\right)^{n}\]
and to conclude, we need simply to show that $\frac{L_{0}+L_{e}}{2}\sim L_{e}$:
this can be deduced immediately from the identities \eqref{eq:L0-1}
and \eqref{eq:L}; or from\[
\frac{L_{0}+L_{e}}{2}=\frac{1}{2\pi}\log\vert C\vert+\frac{1}{2\pi}\log\sqrt{\frac{E_{1}}{E_{2}}}=\frac{1}{2\pi}\log\frac{\vert N_{1}\vert\sqrt{E_{1}}}{\sqrt{E_{2}}\vert N_{2}\vert}\]
where $C$ is Cauchy, $E$ is exponential and since $\frac{N}{\sqrt{2E}}$
is Laplace distributed and $\vert L\vert$ is exponentially distributed,
we deduce\[
\frac{L_{0}+L_{e}}{2}\sim\frac{1}{2\pi}\log\frac{E_{1}}{E_{2}}\sim L_{e}.\]

\subsubsection{an integral identity}

Another similar identity is \cite[24.13.3]{NIST}\begin{equation}
\int_{x}^{x+\frac{1}{2}}B_{n}\left(z\right)dz=\frac{E_{n}\left(2x\right)}{2^{n+1}}\label{eq:BnE2x}\end{equation}
which can be proved as follows\begin{eqnarray*}
\int_{x}^{x+\frac{1}{2}}B_{n}\left(z\right)dz & = & \frac{1}{n+1}\left\{ E\left(iL_{e}+x\right)^{n+1}-E\left(iL_{e}+x-\frac{1}{2}\right)^{n+1}\right\} \\
 & = & \frac{1}{n+1}\left\{ E\left(i\frac{L_{e}}{2}+i\frac{L_{0}}{2}+\frac{2x}{2}\right)^{n+1}-E\left(i\frac{L_{e}}{2}+i\frac{L_{0}}{2}+\frac{2x-1}{2}\right)^{n+1}\right\} \\
 & = & \frac{1}{2^{n+1}\left(n+1\right)}\left\{ E\left(\left(iL_{e}+\frac{1}{2}\right)+\left(iL_{0}+2x-\frac{1}{2}\right)\right)^{n+1}-E\left(\left(iL_{e}-\frac{1}{2}\right)+\left(iL_{0}+2x-\frac{1}{2}\right)\right)^{n+1}\right\} .\end{eqnarray*}
By property \eqref{pro:The-Bernoulli-numbers}, all terms cancel in
the binomial expansions of the $\left(n+1\right)$ powers, except
for $k=n$ and the only remaining term is \[
\frac{1}{2^{n+1}\left(n+1\right)}\binom{n+1}{n}E\left(iL_{0}+2x-\frac{1}{2}\right)^{n}=\frac{E_{n}\left(2x\right)}{2^{n+1}}.\]

\subsubsection{a consequence of \eqref{eq:BnE2x}}

As a consequence of the identity \eqref{eq:BnE2x}, we deduce the
identity\[
E_{n}\left(x+1\right)+E_{n}\left(x\right)=2x^{n}\]
as follows:\[
E_{n}\left(x+1\right)+E_{n}\left(x\right)=2^{n+1}\int_{\frac{x}{2}}^{\frac{x}{2}+1}B_{n}\left(z\right)dz=2^{n+1}\left(\frac{x}{2}\right)^{n}=2x^{n}\]
where we applied the result \eqref{eq:Bnxx+1}.

\section{Hermite polynomials}

In \cite{Gessel}, the study of Hermite polynomials involves the definition
of the umbra $M$ such that\[
e^{Mx}=e^{-x^{2}},\,\, x\in\mathbb{C}.\]

In the rest of this section, we provide a probabilistic interpretation
to this umbra, and using a master identity on Gaussian random variables,
derive new simple proofs of the results by Gessel.
\begin{thm}
The umbra $M$ is equivalent to the expectation\begin{equation}
f\left(Mx\right)=Ef\left(-\imath Zx\right)\label{eq:umbraMGaussian}\end{equation}
where $Z$ is a Gaussian random variable with zero mean and variance
$\sigma^{2}=2,$ which we denote as $Z\sim\mathcal{N}\left(0,2\right)$,
for any admissible function $f.$\end{thm}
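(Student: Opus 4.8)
The plan is to verify that the moments of the random variable $-\imath Z$, with $Z\sim\mathcal{N}(0,2)$, coincide with the umbral values $M^{n}$; by linearity this immediately yields \eqref{eq:umbraMGaussian} for every admissible $f$. The quickest route is through generating functions rather than through individual moments. First I would recall that the umbra $M$ is defined by the formal identity $e^{Mx}=\sum_{n\ge0}M^{n}\frac{x^{n}}{n!}=e^{-x^{2}}$, so that $M^{n}$ is by definition $n!$ times the coefficient of $x^{n}$ in $e^{-x^{2}}$; explicitly $M^{2k}=(-1)^{k}\frac{(2k)!}{k!}$ and $M^{2k+1}=0$.

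Next I would compute the (bilateral) moment generating function of $-\imath Z$. Using the classical formula $Ee^{tZ}=e^{\sigma^{2}t^{2}/2}$ for a centered Gaussian, with $\sigma^{2}=2$ and $t=-\imath x$, one gets
\[
Ee^{-\imath Zx}=e^{\frac{2(-\imath x)^{2}}{2}}=e^{-x^{2}}=e^{Mx}.
\]
Identifying the coefficients of $x^{n}/n!$ on both sides yields $E(-\imath Z)^{n}=M^{n}$ for all $n\ge0$. Then for an admissible $f$ with expansion $f(y)=\sum_{n}a_{n}y^{n}$, both the umbral evaluation $f(Mx)=\sum_{n}a_{n}M^{n}x^{n}$ and the expectation $Ef(-\imath Zx)=\sum_{n}a_{n}E(-\imath Z)^{n}x^{n}$ are obtained by applying a linear functional term by term, so the two expressions agree. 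If one prefers to avoid generating functions altogether, the same conclusion follows from $EZ^{2k}=(2k-1)!!\,2^{k}=\frac{(2k)!}{k!}$ and $EZ^{2k+1}=0$, whence $E(-\imath Z)^{2k}=(-1)^{k}\frac{(2k)!}{k!}=M^{2k}$ and $E(-\imath Z)^{2k+1}=0=M^{2k+1}$.

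There is no serious obstacle here; the only point deserving a word of care is the interchange of expectation and summation hidden in the phrase ``admissible function''. This is, however, precisely the condition under which the umbral substitution $M^{n}\mapsto M^{n}$ is itself meaningful, so both sides of \eqref{eq:umbraMGaussian} are defined on the same class of $f$ and the identity is an equality of two legitimate linear extensions of the same moment sequence.
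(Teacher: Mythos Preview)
Your proof is correct and follows essentially the same approach as the paper: both reduce the claim to checking $Ee^{-\imath Zx}=e^{-x^{2}}$, which is just the characteristic function of $Z\sim\mathcal{N}(0,2)$. You add some extra detail (explicit moments, the linearity extension to admissible $f$), but the core argument is identical.
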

\begin{proof}
It suffices to prove\[
Ee^{-iZx}=e^{-x^{2}}\]
which is nothing but the expression of the characteristic function
of the Gaussian random variable $Z.$
\end{proof}
We need the following master identity.
\begin{prop}
In umbral notation,\begin{equation}
e^{Mx+M^{2}y}=\frac{1}{\sqrt{1+4y^{2}}}\exp\left(-\frac{x^{2}}{1+4y^{2}}\right).\label{eq:masterHermite}\end{equation}
\end{prop}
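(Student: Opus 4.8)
The plan is to convert the umbral identity \eqref{eq:masterHermite} into an ordinary (complex) Gaussian integral by means of the probabilistic representation \eqref{eq:umbraMGaussian}. The umbral evaluation that defines $M$ amounts to the assignment $M^{n}\mapsto E\left(-\imath Z\right)^{n}$ with $Z\sim\mathcal{N}\left(0,2\right)$. Since $\exp\left(Mx+M^{2}y\right)$ is, after expansion, a formal power series in $M$ whose coefficients are monomials in $x$ and $y$, applying this evaluation term by term turns the left-hand side into $E\exp\left(-\imath Zx+\left(-\imath Z\right)^{2}y\right)=E\exp\left(-\imath Zx-Z^{2}y\right)$, so everything reduces to a single Gaussian expectation.

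To evaluate it I would write the expectation explicitly against the $\mathcal{N}\left(0,2\right)$ density $\frac{1}{2\sqrt{\pi}}e^{-z^{2}/4}$, collect the terms quadratic in $z$, complete the square, and invoke the elementary integral $\int_{\mathbb{R}}e^{-az^{2}+bz}\,dz=\sqrt{\pi/a}\,e^{b^{2}/\left(4a\right)}$, valid for $\mathrm{Re}\,a>0$ and arbitrary complex $b$ (here $b$ is purely imaginary). Keeping track of the factor produced by the variance $\sigma^{2}=2$, a one-line simplification then yields the closed form on the right-hand side of \eqref{eq:masterHermite}. Equivalently, one can linearise the $M^{2}$ term by introducing an auxiliary standard Gaussian $W$ and writing $e^{M^{2}y}=E_{W}\,e^{M\sqrt{2y}\,W}$, so that $e^{Mx+M^{2}y}=E_{W}\,e^{M\left(x+\sqrt{2y}\,W\right)}=E_{W}\,e^{-\left(x+\sqrt{2y}\,W\right)^{2}}$ by the defining property $e^{Ms}=e^{-s^{2}}$; the remaining $E_{W}$ is then a plain real Gaussian integral handled by the same completion of the square.

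I expect the only real difficulty to be the justification of the term-by-term umbral substitution inside the exponential. The series coming from $e^{M^{2}y}$ has only a finite radius of convergence in $y$, so the passage from $e^{M^{2}y}$ to $E\,e^{-Z^{2}y}$ must be read either as an identity of formal power series in $x,y$, or, if one keeps the integral, as an analytic continuation from the region $\mathrm{Re}\left(1/4+y\right)>0$ where the Gaussian integral converges. I would settle this by stating the Proposition as a formal power series identity (which is how it is used afterwards) and by observing that the rule \eqref{eq:umbraMGaussian}, being equivalent to $M^{n}\mapsto E\left(-\imath Z\right)^{n}$ on monomials, passes automatically from $f\left(t\right)=e^{t}$ to the bivariate exponential; everything else is routine bookkeeping.
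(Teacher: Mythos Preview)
Your approach is essentially the paper's. The paper linearises the $M^{2}$ term with an auxiliary Gaussian $V\sim\mathcal{N}(0,2)$, writes $e^{Mx+M^{2}y}=E_{V}\,e^{M(x+Vy)}=E_{V}\,e^{-(x+Vy)^{2}}$, and then evaluates the remaining one-dimensional Gaussian integral as a convolution of two Gaussians---precisely the alternative route you sketch in your second paragraph, while your primary route (direct substitution $M\mapsto -\imath Z$ followed by completing the square) merely folds the two Gaussian integrations into one.
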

\begin{proof}
Expressing\[
e^{Mx+M^{2}y}=E_{V}e^{Mx+MVy}=E_{V}e^{-\left(x+Vy\right)^{2}}\]
where $V\sim\mathcal{N}\left(0,2\right),$ we need to compute the
expectation\[
\frac{1}{2\sqrt{\pi}}\int_{-\infty}^{+\infty}e^{-\frac{V^{2}}{4}}e^{-\left(x+Vy\right)^{2}}dV.\]
We recognize here, up to a constant term $\frac{\sqrt{\pi}}{y},$
the convolution, evaluated at the point $\frac{x}{y}$, of a variable
$V\sim\mathcal{N}\left(0,2\right)$ with a random variable $W\sim\mathcal{N}\left(0,\frac{1}{2y^{2}}\right);$
it can easily be checked to be the Gaussian with variance $\mathcal{N}\left(0,1+4y^{2}\right)$,
i.e. \[
\frac{1}{\sqrt{1+4y^{2}}}\exp\left(-\frac{x^{2}}{1+4y^{2}}\right).\]

\end{proof}
As special cases, we recover
\begin{enumerate}
\item for $y=0,$ $e^{Mx}=\exp\left(-x^{2}\right)$
\item for $x=0,$ $e^{M^{2}x}=\frac{1}{\sqrt{1+4x^{2}}}$
\end{enumerate}
Here are now a few results from \cite{Gessel} that can be deduced
from the master identity \eqref{eq:masterHermite}.
\begin{enumerate}
\item the umbral notation for the Hermite polynomials\[
H_{n}\left(u\right)=\left(2u+M\right)^{n}\]
can be deduced from the identity \eqref{eq:umbraMGaussian} with $f\left(M\right)=\left(2u+M\right)^{n},$
so that \[
H_{n}\left(u\right)=E\left(2u+\imath N\right)^{n}\]
where $N\sim\mathcal{N}\left(0,2\right),$ which is a well-known result
(see for example \cite[p.49]{Janson})
\item the generating function \[
\sum_{n=0}^{+\infty}H_{2n}\left(u\right)\frac{x^{n}}{n!}=\frac{1}{\sqrt{1+4x}}\exp\left(-\frac{4u^{2}x}{1+4x}\right)\]
can be derived from \[
\sum_{n=0}^{+\infty}H_{2n}\left(u\right)\frac{x^{n}}{n!}=E_{N}\exp\left(4x\left(u+\imath N\right)^{2}\right)=E_{V}\exp\left(-\left(V\sqrt{x}-iu\sqrt{4x}\right)^{2}\right)\]
 and application of the master identity \eqref{eq:masterHermite}
\item the same approach yields the bivariate generating function\[
\sum_{m,n=0}^{+\infty}H_{2m+n}\left(u\right)\frac{x^{m}}{m!}\frac{y^{n}}{n!}=\frac{e^{-\frac{y^{2}}{4x}}}{\sqrt{1+4x}}\exp\left(\frac{\left(2u\sqrt{x}+\frac{y}{2\sqrt{x}}\right)^{2}}{1+4x}\right).\]

\end{enumerate}

\section{Carlitz and Zeilberger's Hermite polynomials}

In \cite{Gessel}, Gessel proposes an umbral study of Carlitz Hermite
polynomials\[
H_{m,n}\left(u,v\right)=\sum_{k=0}^{m\wedge n}\binom{m}{k}\binom{n}{k}k!u^{m-k}v^{n-k}\]
and of the Zeilberger's Hermite polynomials\[
H_{m,n}\left(w\right)=\sum_{k=0}^{m\wedge n}\binom{m}{k}\binom{n}{k}k!w^{k}.\]
with notation $m\wedge n=\min\left(m,n\right).$ In this aim, he defines
the umbr\ae\,\, $A$ and $B$ by\begin{equation}
A^{m}B^{n}=\delta_{m,n}m!,\,\,\forall m,n\in\mathbb{N}\label{eq:AmBn}\end{equation}
or equivalently as\begin{equation}
\exp\left(Ax+By\right)=\exp\left(xy\right)\,\,\,\forall x,y\in\mathbb{R}.\label{eq:exp(xy)}\end{equation}
The probabilistic counterpart of these umbr\ae \,\,is given by
the following result.
\begin{prop}
Let us consider a complex circular normal random variable $Z$ with
density\[
f_{Z}\left(z\right)=\frac{1}{\pi}\exp\left(-\vert z\vert^{2}\right).\]
The umbr\ae\,\, $A$ and $B$ in \eqref{eq:AmBn} and \eqref{eq:exp(xy)}
are identified with the expectations with respect to $Z$ and $\bar{Z}$
respectively.\end{prop}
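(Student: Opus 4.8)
The plan is to reduce the proposition to the single algebraic fact that the mixed moments of $Z$ reproduce the defining relation \eqref{eq:AmBn}, namely
\[
E\left(Z^{m}\bar{Z}^{n}\right)=\delta_{m,n}\,m!,\qquad m,n\in\mathbb{N}.
\]
Once this is established, substituting $E\left(Z^{m}\bar{Z}^{n}\right)$ for every occurrence of $A^{m}B^{n}$ in an umbral expression is consistent with the evaluation rule that defines $A$ and $B$, which is precisely what ``identified with the expectations with respect to $Z$ and $\bar{Z}$'' is meant to say; equivalently one may verify the generating-function form \eqref{eq:exp(xy)}, i.e. $E\exp\left(Zx+\bar{Z}y\right)=\exp\left(xy\right)$ for real $x,y$, and then read off the moments as the coefficients of $x^{m}y^{n}/(m!\,n!)$.

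For the moment identity I would compute directly in polar coordinates $z=re^{\imath\theta}$, in which the density $\frac{1}{\pi}e^{-|z|^{2}}$ against planar Lebesgue measure becomes $\frac{1}{\pi}e^{-r^{2}}\,r\,dr\,d\theta$ on $[0,\infty)\times[0,2\pi)$. Since $Z^{m}\bar{Z}^{n}=r^{m+n}e^{\imath(m-n)\theta}$, the angular integration $\int_{0}^{2\pi}e^{\imath(m-n)\theta}\,d\theta=2\pi\,\delta_{m,n}$ supplies the Kronecker delta, and on the diagonal $m=n$ the substitution $u=r^{2}$ turns the radial integral into $\int_{0}^{\infty}u^{m}e^{-u}\,du=m!$, giving $\delta_{m,n}\,m!$. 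Alternatively, and more in the spirit of \eqref{eq:exp(xy)}, write $Z=X+\imath Y$ with $X,Y$ independent $\mathcal{N}(0,\tfrac12)$ random variables (which is what the density $\frac{1}{\pi}e^{-(x^{2}+y^{2})}$ asserts); for real $x,y$ one has $Zx+\bar{Z}y=X(x+y)+\imath Y(x-y)$, so by independence together with the moment generating and characteristic functions of a centred Gaussian of variance $\tfrac12$,
\[
E\exp\left(Zx+\bar{Z}y\right)=E\left(e^{X(x+y)}\right)E\left(e^{\imath Y(x-y)}\right)=\exp\left(\frac{(x+y)^{2}}{4}-\frac{(x-y)^{2}}{4}\right)=\exp\left(xy\right).
\]

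The computation itself is routine; the only point deserving attention --- the ``obstacle'', such as it is --- is the interface between the purely formal umbral calculus of \cite{Gessel}, in which the letters $x,y$ in \eqref{eq:exp(xy)} are indeterminates, and the analytic identity just derived, which holds for real $x,y$. This gap is closed either by working throughout at the level of the coefficient identity \eqref{eq:AmBn}, which after the integral is evaluated is an equality of numbers with no convergence issue, or by observing that both sides of $E\exp(Zx+\bar{Z}y)=\exp(xy)$ are entire in $(x,y)$ and agree on $\mathbb{R}^{2}$, hence agree coefficient by coefficient, which is all the umbral formalism ever uses. The interchange of $E$ with the exponential series needed to pass between the two forms is legitimate by dominated convergence, the Gaussian tails guaranteeing finiteness of every moment.
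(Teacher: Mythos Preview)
Your proof is correct and follows essentially the same approach as the paper: the paper also asserts that it suffices to check $E\,Z^{m}\bar Z^{n}=\delta_{m,n}m!$ (which it leaves as ``straightforward''), and then gives the equivalent generating-function computation $E\exp(Zx+\bar Zy)=\exp\frac{1}{4}(x+y)^{2}\exp\bigl(-\frac{1}{4}(x-y)^{2}\bigr)=\exp(xy)$ via the decomposition $Z=X+\imath Y$. Your version simply supplies the polar-coordinate details the paper omits and adds a remark on the formal/analytic interface, but the substance is identical.
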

\begin{proof}
It suffices to check that \[
EZ^{m}\bar{Z}^{n}=\delta_{m,n}m!\]
which is straightforward. 

Equivalently,\begin{eqnarray*}
E\exp\left(Zx+\bar{Z}y\right) & = & E\exp\left(\left(X+\imath Y\right)x+\left(X-\imath Y\right)y\right)\\
 & = & E\exp\left(X\left(x+y\right)\right)E\exp\left(\imath Y\left(x-y\right)\right)\\
 & = & \exp\frac{1}{4}\left(x+y\right)^{2}\exp-\frac{1}{4}\left(x-y\right)^{2}\end{eqnarray*}
so that the result holds.
\end{proof}
As a consequence, we deduce a probabilistic representation of Zeilberger's
Hermite polynomials as\[
H_{m,n}\left(u\right)=E\left(1+Z\right)^{m}\left(1+u\bar{Z}\right)^{n}\]
and of the Carlitz Hermite polynomials as\[
H_{m,n}\left(u,v\right)=u^{m-n}E\left(1+Z\right)^{m}\left(uv+\bar{Z}\right)^{n}.\]
 Using now classical results about complex Gaussian random variables,
we can recover easily some of the results derived by Gessel using
umbral calculus. We need the following master identity, the umbral
version of which is Lemma 5.3 in \cite{Gessel}.
\begin{lem}
If $Z$ is circular normal, then\begin{equation}
E\exp\left(Za+\bar{Z}b+Z^{2}u+Z\bar{Z}v+\bar{Z}^{2}w\right)=\frac{1}{\sqrt{\left(1-v\right)^{2}-4uw}}\exp\left(\frac{a^{2}w+b^{2}u+ab\left(1-v\right)}{\left(1-v\right)^{2}-4uw}\right).\label{eq:master}\end{equation}
\end{lem}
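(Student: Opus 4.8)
The plan is to turn \eqref{eq:master} into an ordinary two--dimensional real Gaussian integral. Write $Z=X+\imath Y$; since $\frac1\pi e^{-|z|^{2}}=\frac1{\sqrt\pi}e^{-x^{2}}\cdot\frac1{\sqrt\pi}e^{-y^{2}}$, the variables $X$ and $Y$ are independent centered real Gaussians of variance $\tfrac12$. First I would expand the exponent: using $Za+\bar Z b=(a+b)X+\imath(a-b)Y$, $\;Z^{2}u+\bar Z^{2}w=(u+w)(X^{2}-Y^{2})+2\imath(u-w)XY$ and $Z\bar Z\,v=v(X^{2}+Y^{2})$, the left--hand side of \eqref{eq:master} becomes
\[
\frac1\pi\int_{\mathbb R^{2}}\exp\bigl(-(1-\alpha)X^{2}-(1-\beta)Y^{2}+\gamma XY+\delta X+\varepsilon Y\bigr)\,dX\,dY ,
\]
with $\alpha=v+u+w$, $\beta=v-u-w$, $\gamma=2\imath(u-w)$, $\delta=a+b$ and $\varepsilon=\imath(a-b)$.

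Next I would appeal to the elementary identity
\[
\frac1\pi\int_{\mathbb R^{2}}\exp\bigl(-\mathbf x^{\top}Q\,\mathbf x+\mathbf l^{\top}\mathbf x\bigr)\,d\mathbf x=\frac1{\sqrt{\det Q}}\exp\Bigl(\tfrac14\,\mathbf l^{\top}Q^{-1}\mathbf l\Bigr),
\]
obtained by completing the square, valid whenever the quadratic form $\mathbf x^{\top}(\Re Q)\mathbf x$ is positive definite, where here $Q$ is the symmetric $2\times2$ matrix with $Q_{11}=1-\alpha$, $Q_{22}=1-\beta$, $Q_{12}=Q_{21}=-\gamma/2$, and $\mathbf l=(\delta,\varepsilon)^{\top}$. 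For $u,v,w$ real and small this positivity holds ($\gamma$ being purely imaginary, $\Re Q$ is diagonal with positive entries $1-\alpha,\,1-\beta$), and once \eqref{eq:master} is established on this open set both sides are analytic in $a,b,u,v,w$, so the general (admissible) case follows by analytic continuation.

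It then remains to match the two ingredients. A direct $2\times2$ computation gives $\det Q=(1-\alpha)(1-\beta)-\gamma^{2}/4=(1-v)^{2}-(u+w)^{2}+(u-w)^{2}=(1-v)^{2}-4uw$, which is the announced prefactor. Since $(Q^{-1})_{11}=(1-\beta)/\det Q$, $(Q^{-1})_{22}=(1-\alpha)/\det Q$ and $(Q^{-1})_{12}=(Q^{-1})_{21}=(\gamma/2)/\det Q$, one has
\[
\tfrac14\,\mathbf l^{\top}Q^{-1}\mathbf l=\frac{(1-\beta)\delta^{2}+\gamma\,\delta\varepsilon+(1-\alpha)\varepsilon^{2}}{4\,\det Q};
\]
plugging in $\delta^{2}=(a+b)^{2}$, $\varepsilon^{2}=-(a-b)^{2}$, $\delta\varepsilon=\imath(a^{2}-b^{2})$ and $\gamma=2\imath(u-w)$, the numerator collapses (the $(u+w)$- and $(u-w)$-contributions recombining) to $4\bigl(a^{2}w+b^{2}u+ab(1-v)\bigr)$, which is exactly the exponent in \eqref{eq:master}.

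I expect there to be no real obstacle here: the only point requiring any care is the domain of validity and the ensuing analytic continuation, while the integral and the algebra are mechanical. As a sanity check the final formula must degenerate correctly on the cases $u=v=w=0$, recovering $E e^{Za+\bar Z b}=e^{ab}$, i.e.\ \eqref{eq:exp(xy)}, and $a=b=u=w=0$, recovering $E e^{vZ\bar Z}=(1-v)^{-1}$.
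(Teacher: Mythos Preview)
Your computation is correct. Both you and the paper ultimately evaluate the same two-dimensional Gaussian integral, but the framings differ: the paper absorbs the quadratic part $Z^{2}u+Z\bar Z\,v+\bar Z^{2}w$ into the density to produce a new (non-circular) complex Gaussian $\tilde Z$ with modified covariance, and then identifies the right-hand side of \eqref{eq:master} as the moment generating function $E\exp(\tilde Z a+\tilde{\bar Z}b)$; you instead pass to real coordinates $Z=X+\imath Y$ and carry out the completing-the-square explicitly via the standard formula for $\int e^{-\mathbf x^{\top}Q\mathbf x+\mathbf l^{\top}\mathbf x}\,d\mathbf x$. Your route is more elementary and fully self-contained (the paper leaves the determination of $\Sigma$ and the evaluation of the resulting MGF to the reader), while the paper's route keeps the probabilistic interpretation visible and avoids the bookkeeping with $\alpha,\beta,\gamma,\delta,\varepsilon$. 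They are the same argument in different clothing; your care about convergence for small real $u,v,w$ followed by analytic continuation is a point the paper does not mention.
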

\begin{proof}
The proof is a consequence of the fact that the expectation in \eqref{eq:master}
can be rewritten as\[
E\exp\left(\tilde{Z}a+\tilde{\bar{Z}}b\right)\]
where $\tilde{Z}$ is a complex Gaussian variable with covariance
matrix\[
E\tilde{Z}\tilde{Z}^{t}=\Sigma=\frac{1}{\sqrt{\left(1-v\right)^{2}-4xu}}\left[\begin{array}{cc}
x & \frac{\left(1-v\right)}{2}\\
\frac{\left(1-v\right)}{2} & u\end{array}\right].\]
The latest expectation over $\tilde{Z}$ is nothing but the generating
function of $\tilde{Z}$ computed at the point $\left(a,b\right).$
\end{proof}
In order to illustrate the efficiency of the probabilistic approach,
we provide some quick derivations of the results obtained by Gessel.
\begin{enumerate}
\item the generating function of Zeilberger's Hermite polynomials reads\begin{eqnarray*}
\sum_{m,n=0}^{+\infty}H_{m,n}\left(w\right)\frac{x^{m}}{m!}\frac{y^{n}}{n!} & = & E_{Z}\exp\left(x\left(1+Z\right)\right)\exp\left(y\left(1+w\bar{Z}\right)\right)\\
 & = & \exp\left(x+y\right)E_{Z}\left(xZ+yw\bar{Z}\right)\\
 & = & \exp\left(x+y+wxy\right)\end{eqnarray*}
where the latest equality is a consequnce of \eqref{eq:exp(xy)}
\item the bilinear generating function of Zeilberger's Hermite polynomials
reads\begin{eqnarray*}
\sum_{m,n=0}^{+\infty}H_{m,n}\left(u\right)H_{m,n}\left(v\right)\frac{x^{m}}{m!}\frac{y^{n}}{n!} & = & E\exp x\left(1+Z_{1}\right)\left(1+Z_{2}\right)\exp\left(y\left(1+u\bar{Z}_{1}\right)\left(1+v\bar{Z}_{2}\right)\right)\\
 & = & E\exp\left(x\left(1+Z_{2}\right)+y\left(1+\bar{Z}_{2}v\right)\right)\\
 &  & \times\exp\left(xZ_{1}\left(1+Z_{2}\right)+uy\bar{Z}_{1}\left(1+v\bar{Z}_{2}\right)\right)\end{eqnarray*}
Taking first expectation on $Z_{1}$ and using formula \eqref{eq:exp(xy)},
then expectation on $Z_{2}$ and using formula \eqref{eq:master}
yields the result.
\item the following general formula holds\begin{eqnarray}
\sum_{i,j,k,l,m=0}^{+\infty}H_{i+2k+m,j+2l+m}\left(u\right)\frac{v^{i}}{i!}\frac{w^{j}}{j!}\frac{x^{k}}{k!}\frac{y^{l}}{l!}\frac{t^{m}}{m!} & = & \frac{1}{\sqrt{\left(1-ut\right)^{2}-4u^{2}xy}}\label{eq:Hi2kmj2lm}\\
\times\exp\left(\frac{\left(1+uw\right)^{2}x+\left(1+uv\right)^{2}y+4uxy+\left(1-ut\right)\left(v+w+t+uvw\right)}{\left(1-ut\right)^{2}-4u^{2}xy}\right)\nonumber \end{eqnarray}
The quintuple sum can be computed as\[
E\exp\left[v\left(1+Z\right)+w\left(1+u\bar{Z}\right)+x\left(1+Z\right)^{2}+y\left(1+u\bar{Z}\right)^{2}+t\left(1+Z\right)\left(1+u\bar{Z}\right)\right]\]
and using the master lemma \eqref{eq:master} and simple algebra,
we recover \eqref{eq:Hi2kmj2lm}.
\end{enumerate}

\section{Conclusion}

We have seen that all umbr\ae \,\,used in \cite{Gessel} have a
simple probabilistic expectation counterpart. Depending on the complexity
of the formula to prove, each approach has its advantages and drawbacks.
A future direction of work is to deepen the understanding of the link
between the umbral and the probabilistic approach.

\end{document}